\newtheorem{theorem}{Theorem}[section]
\newtheorem{lemma}[theorem]{Lemma}
\newtheorem{Prop}[theorem]{Proposition}
\newtheorem{Def}[theorem]{Definition}
\def\square{\hbox{\vrule\vbox{\hrule\phantom{o}\hrule}\vrule}}
\newcommand{\be}{\begin{equation}}
\newcommand{\ee}{\end{equation}}
\numberwithin{equation}{section}
\newcommand{\ddoto}{{\rm {\ddot{o}}}}
\numberwithin{equation}{section}
\begin{document}

\title[Eigenvalue problem of Zakharov-Shabat operator]{Real eigenvalues of a non-self-adjoint perturbation of the self-adjoint Zakharov-Shabat operator}
\author{K.~Hirota}



\begin{abstract}
We study the eigenvalues of the self-adjoint Zakharov-Shabat operator corresponding to the defocusing nonlinear Schr${\rm \ddot{o}}$dinger equation in the inverse scattering method.
Real eigenvalues exist when the square of the potential has a simple well.
We derive two types of quantization condition for the eigenvalues by using the exact WKB method, and show that the eigenvalues stay real for a sufficiently small non-self-adjoint perturbation when the potential has some $\mathcal{PT}$-like symmetry.

\end{abstract}

\maketitle
\renewcommand{\thefootnote}{\fnsymbol{footnote}}
\footnote[0]{{\it Keywords:} Zakharov-Shabat eigenvalue problem, exact WKB method, quantization condition.}
\renewcommand{\thefootnote}{\arabic{footnote}}


\setcounter{section}{0}
\section{Introduction}
\ \ \ We consider the eigenvalue problem
\begin{align} \label{ZSeq}
L\bm{u}(x) = \lambda \bm{u}(x),
\end{align}
for the first order $2\times 2$ differential system on the line:
\begin{align*}
L:=
\begin{pmatrix}
\displaystyle ih\frac{d}{dx} &  -iA(x) \\
 iA(x) & \displaystyle -ih\frac{d}{dx}
\end{pmatrix},
\end{align*}
where $h$ is a small positive parameter, $\lambda$ is a spectral parameter, ${\bm u}(x)$ is a column vector, and $A(x)$ is a real-valued potential. This operator is called the Zakharov-Shabat operator, which is one of the two operators in the Lax pair for the defocusing  
nonlinear Schr$\ddoto$dinger equation: 
\begin{align*}
ih\frac{\partial \psi}{\partial t} + \frac{h^{2}}{2}\frac{\partial^{2} \psi}{\partial x^{2}} - \left|\psi \right|^{2} \psi = 0, \quad \psi = \psi(t,x),
\end{align*}
and the scattering theory of $L$ plays an important role in the analysis of the solutions of the initial value  
problem for this equation.

\ \ \ The operator $L$ is self-adjoint, and it is expected that $L$ has real eigenvalues when $A(x)^{2}$ has a well. In the first part of our study, we derive the Bohr-Sommerfeld type quantization condition for the eigenvalues of $L$ under the following assumption.

\ \ 
{\bf Assumption (A1).} 
Let $A(x)$ be a real-valued function analytic in $D:=\{ z\in \mathbb{C} ; \left| \rm{Im}z\right| < \delta \}$ for some $\delta >0$, and $\lambda_{0}$ a positive real number satisfying the following conditions: 
\begin{enumerate}
\item There exist two real numbers, $\alpha_{0}$ and $\beta_{0}~(\alpha_{0}<\beta_{0})$ such that $|A(x)| = \lambda_{0}, x \in \mathbb{R}$ if and only if $x = \alpha_{0},~\beta_{0}$.
\item $A^{'}(\alpha_{0})A^{'}(\beta_{0}) \neq 0.$
\item 
$|A(x)| < \lambda_{0}$ for $\alpha_{0} < x < \beta_{0},$ and $
|A(x)|> \lambda_{0}$ for $x<\alpha_{0}$ and $x>\beta_{0}$.
\item $\displaystyle \liminf_{|x| \to \infty}|A(x)| > \lambda_{0}$.
\end{enumerate}

This assumption permits two types of potentials. One is a simple well type where $A(\alpha_{0}) = A(\beta_{0})$, and the other is monotonic type where  $A(\alpha_{0})=-A(\beta_{0})$. In both cases, $A(x)^{2}$ has a simple well, see Figure \ref{fig:expotent}.

\begin{figure}[htbp]
 \begin{minipage}{0.48\hsize}
  \begin{center}
   \includegraphics[width=50mm]{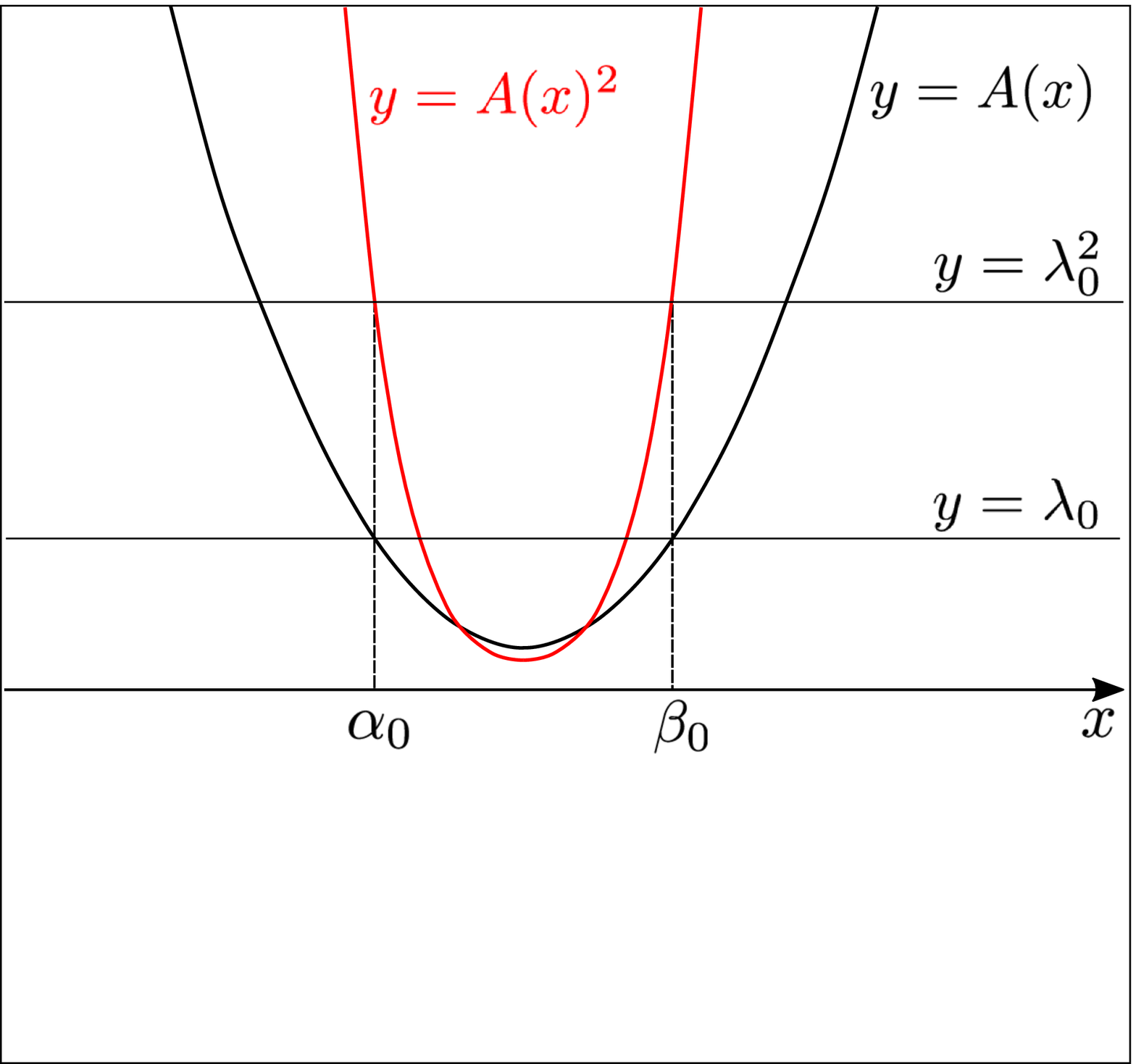}
  \end{center}
  \label{fig:one}
 \end{minipage}
 \begin{minipage}{0.48\hsize}
  \begin{center}
   \includegraphics[width=50mm]{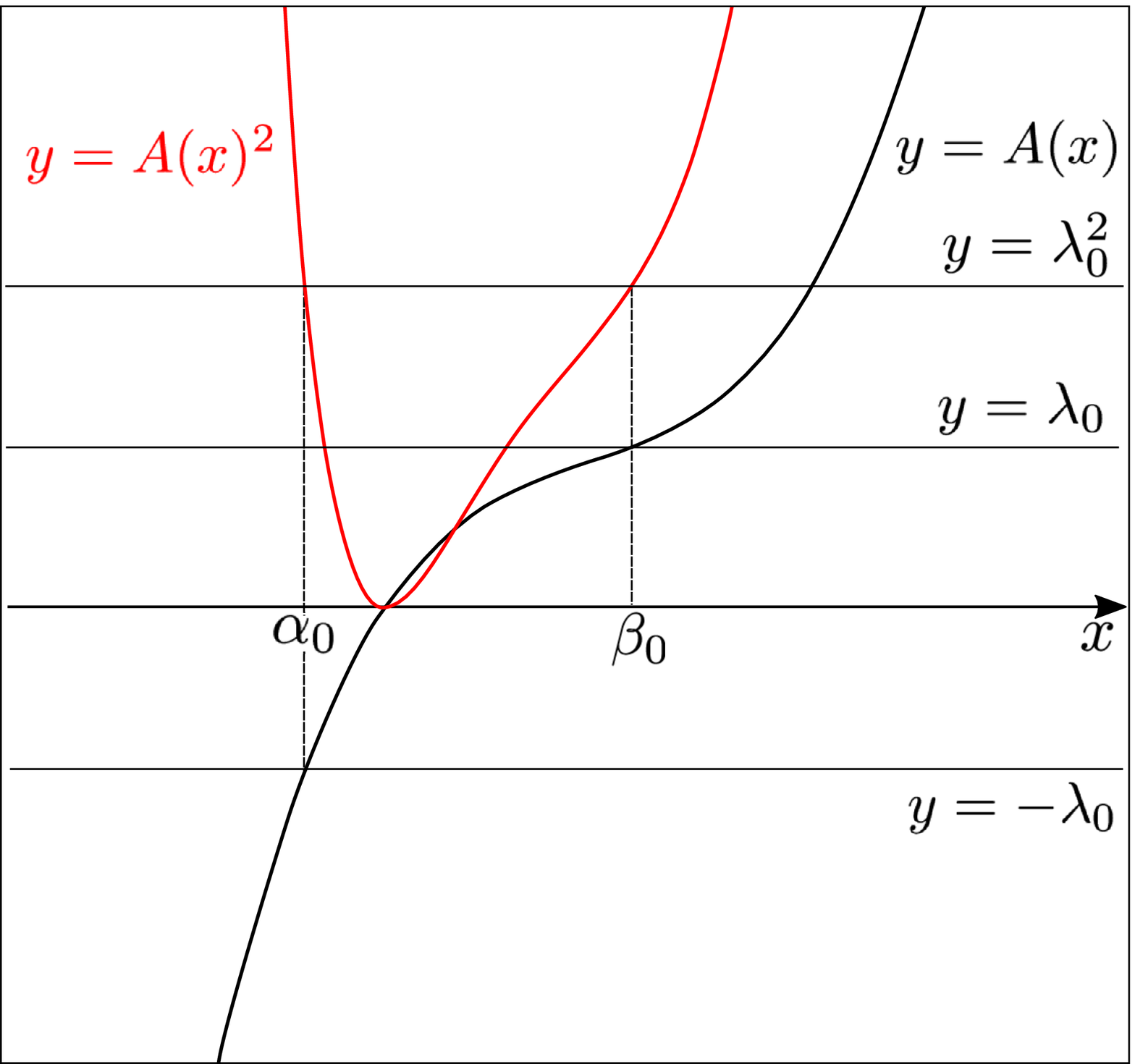}
  \end{center}
  \label{fig:expotent}
 \end{minipage}
\caption{Examples of the potential $A(x)$.}
\end{figure}

For $\lambda \in \mathbb{R}$ close enough to $\lambda_{0}$, the function $\lambda^{2}-A(x)^{2}$ has exactly two real zeros $\alpha(\lambda)$ and  $\beta(\lambda),$ close to $\alpha_{0}$ and $\beta_{0}$ respectively, and we define the action integral
\begin{align} \label{action}
I(\lambda) := \int^{\beta(\lambda)}_{\alpha(\lambda)} \sqrt{\lambda^{2}-A(t)^{2}}dt.
\end{align}
Then, we obtain the following quantization conditions. 
\begin{theorem}
Assume (A1). In the case $A(\alpha_{0}) = A(\beta_{0})$, there exist positive constants $\delta$ and $h_{0}$, and a function $r_{+}(\lambda, h)$ bounded on $[\lambda_{0}-\delta,\lambda_{0}+\delta] \times (0, h_{0}]$ such that $\lambda \in [\lambda_{0}-\delta,\lambda_{0}+\delta]$ is an eigenvalue of $L$ for $h \in (0,h_{0}]$ if and only if 
\begin{align} \label{qcon1}
I(\lambda) = \left(k+\frac{1}{2}\right)\pi h + h^{2}r_{+}(\lambda,h) 
\end{align}
holds for some integer $k$. In the case $A(\alpha_{0}) = - A(\beta_{0})$, there exist positive constants $\delta$ and $h_{0}$, and a function $r_{-}(\lambda, h)$ bounded on $[\lambda_{0}-\delta,\lambda_{0}+\delta] \times (0, h_{0}]$ such that $\lambda \in [\lambda_{0}-\delta,\lambda_{0}+\delta]$ is an eigenvalue of $L$ for $h \in (0,h_{0}]$ if and only if
\begin{align} \label{qcon2}
I(\lambda) = k\pi h + h^{2}r_{-}(\lambda,h)
\end{align}
holds for some integer $k$. 
\end{theorem}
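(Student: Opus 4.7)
The plan is to apply the exact WKB method for the $2\times 2$ Zakharov--Shabat system, in the spirit of the G\'erard--Grigis and Fujiie--Ramond treatments of systems with turning points. Away from the two turning points $\alpha(\lambda), \beta(\lambda)$, which are simple zeros of $\lambda^2 - A(x)^2$ thanks to (A1)(2), I construct exact WKB fundamental solutions of the form $\bm{u}^\pm(x,h) = e^{\pm i\phi(x)/h}\,\bm{w}^\pm(x,h)$, where $\phi'(x) = \sqrt{\lambda^2 - A(x)^2}$ with branches fixed in each Stokes region and $\bm{w}^\pm(x,h)$ is the eigenvector of the principal symbol of $L$ corrected order by order in $h$ through the standard transport equations. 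These WKB series are Borel-summable in the wedges between Stokes lines, producing genuine solutions with controlled remainders; the uniformity in $\lambda$ near $\lambda_0$ follows from the stability of the Stokes geometry, which is guaranteed by conditions (1)--(4) of (A1).

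Because $\liminf_{|x|\to\infty}|A(x)| > \lambda_0$, the outer forbidden regions admit unique (up to scalar) exact WKB solutions $\bm{u}_L, \bm{u}_R$ that decay at $-\infty, +\infty$ respectively. Then $\lambda$ is an eigenvalue of $L$ if and only if their (constant) Wronskian vanishes. I next propagate $\bm{u}_L$ to the right across the Airy-type turning point at $\alpha$, and $\bm{u}_R$ to the left across $\beta$, using the classical Airy connection matrices that express a recessive solution in the forbidden region as an explicit complex combination of the two oscillating WKB solutions in $(\alpha,\beta)$. Equating the two propagated expressions in the allowed interval reduces the Wronskian condition to a transcendental equation of the form
\begin{align*}
e^{2iI(\lambda)/h} = \sigma_0 + h^2 \rho(\lambda,h),
\end{align*}
with $\rho$ bounded on $[\lambda_0-\delta,\lambda_0+\delta]\times(0,h_0]$ and $\sigma_0 \in \{+1,-1\}$. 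Taking logarithms yields a condition of the form \eqref{qcon1} or \eqref{qcon2}; the absence of a term linear in $h$ in the remainder comes from a parity property of the transport equations on the real axis.

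The value of $\sigma_0$ is determined by the relative orientation of the symbol eigenvectors at $\alpha$ and $\beta$: a short calculation shows that the eigenvector of $\sigma(L)$ for the eigenvalue $\lambda$ carries a factor involving $A(x)$, so traversing the well introduces the sign $\mathrm{sgn}(A(\beta_0)/A(\alpha_0))$, which combines with the intrinsic Maslov factor coming from the two Airy half-turns to give $\sigma_0 = -\mathrm{sgn}(A(\alpha_0)\,A(\beta_0))$. Hence $\sigma_0 = -1$ in the well case $A(\alpha_0)=A(\beta_0)$ (producing the half-integer shift in \eqref{qcon1}), and $\sigma_0 = +1$ in the monotonic case $A(\alpha_0)=-A(\beta_0)$ (producing the integer one in \eqref{qcon2}). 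The main technical obstacle I anticipate is exactly this phase/sign bookkeeping through the Airy connection formulas, and in particular the tracking of the eigenvector rotation across each turning point; this rotation is precisely what distinguishes the two cases of the theorem. Once it is sorted out, the uniformity in $\lambda$ and the $O(h^2)$ control of the remainder are standard within the exact WKB framework.
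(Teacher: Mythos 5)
Your overall architecture coincides with the paper's: exact WKB bases attached to the two simple turning points $\alpha(\lambda),\beta(\lambda)$, decaying solutions on $\mathbb{R}_\mp$ whose Wronskian vanishes exactly at eigenvalues, and a reduction to $e^{2iI(\lambda)/h}=\sigma_0+\mathcal{O}(h)$ with $\sigma_0=-\mathrm{sgn}\bigl(A(\alpha_0)A(\beta_0)\bigr)$; your value of $\sigma_0$ agrees with the paper's. Where you differ is the implementation of the connection step. You propose local Airy connection matrices at each turning point, whereas the paper never introduces a local model: it places base points in the four Stokes sectors, writes the connection coefficients as ratios of Wronskians via the exact identity $\mathcal{W}\bigl({\bm u}^{\pm}(\cdot;\gamma,x_{0}),{\bm u}^{\mp}(\cdot;\gamma,x_{1})\bigr)=\mp 4w^{\pm}_{even}(x_{1};x_{0})$, evaluates each $w_{even}$ as $1+\mathcal{O}(h)$ along paths crossing the Stokes lines, and extracts the sign from a purely algebraic branch computation: a $-2\pi$ rotation around a turning point multiplies $H(x)=\bigl((A+\lambda)/(A-\lambda)\bigr)^{1/4}$ by $\mp i$ according as that point is a zero of $A-\lambda$ or of $A+\lambda$, and the product of the two quarter-turns is $\mp 1$ according as $A(\alpha)A(\beta)\gtrless 0$. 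The Wronskian route buys you this sign in a few lines of bookkeeping; the Airy route requires building the $2\times 2$ normal form at each turning point, which is not completely routine here since the transport coefficient $H'/H$ has a simple pole there.

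Two caveats on your write-up. First, the half-integer versus integer dichotomy is the entire content of the theorem, and your justification of $\sigma_0$ (``a short calculation shows\dots'') is exactly the step you then defer as your main obstacle; as it stands the decisive computation is asserted, not proved, so the argument is an outline rather than a proof until the eigenvector-rotation/Maslov bookkeeping is actually carried out in whichever local model you adopt. Second, you do not need the remainder in $e^{2iI/h}=\sigma_0+h^{2}\rho$ to start at order $h^{2}$: since the quantization condition reads $I(\lambda)+\frac{h}{2i}\log\bigl(\sigma_0^{-1}e^{2iI/h}\bigr)=k\pi h$ up to the constant shift, an $\mathcal{O}(h)$ error in the connection coefficients already yields the $h^{2}r_{\pm}(\lambda,h)$ term of \eqref{qcon1}--\eqref{qcon2}, so the ``parity property killing the linear term'' is unnecessary and can be dropped.
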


\ \ \ Next, we add a small complex perturbation to the potential $A(x)$:
\begin{align*}
A_{\varepsilon}(x) = A(x) + i\varepsilon B(x)
\end{align*}
with a real-valued function $B(x)$ and a positive small parameter $\varepsilon$, and consider the eigenvalues of $L_{\varepsilon}$
\begin{align*}
L_{\varepsilon}:=
\begin{pmatrix}
\displaystyle ih\frac{d}{dx} &  -iA_{\varepsilon}(x) \\
 iA_{\varepsilon}(x) & \displaystyle -ih\frac{d}{dx}
\end{pmatrix}.
\end{align*}
This operator is no longer self-adjoint, and eigenvalues become complex in general. 

\ \ \ In the case of Schr${\rm \ddot{o}}$dinger operator, $\mathcal{PT}$-symmetry has been expected to be  an alternative to the self-adjointness in order to have real eigenvalues.
In recent studies, Boussekkine and Mecherout considered in for the Schr${\rm \ddot{o}}$dinger operator with  $\mathcal{PT}$-symmetry
\begin{align*}
P_{\varepsilon}:=-h^{2}\frac{d^{2}}{dx^{2}} + V(x) + i \varepsilon W(x),
\end{align*}
where $V(x)$ is a simple well even function and $W(x)$ is an odd function, and showed that reality of eigenvalues also holds for sufficiently small $\varepsilon$ and $h$. After that, Boussekkine, Mecherout, Ramond and Sj$\rm \ddot{o}$strand studied in [8] the double well case with $\mathcal{PT}$-symmetry, and found that the eigenvalues stay real  only for exponentially small $\varepsilon$ with respect to $h$.

\ \ \ In this paper, we continue in this direction and prove that a sufficiently small complex perturbation of the self-adjoint Zakharov-Shabat operator $L_{\varepsilon}$ has real eigenvalues when $A(x)$ and $B(x)$ have some $\mathcal{PT}$-like symmetry symmetry in the case where $A(x)^{2}$ has a simple well, even though the perturbed operator $L_{\varepsilon}$ is non-self-adjoint. Recalling that the condition where $P_{\varepsilon}$ is $\mathcal{PT}$-symmetric is equivalent to one where $V(x)$ is an even function and $W(x)$ is an odd function (see [4] or [8]), we assume the following symmetry properties for $A(x)$ and $B(x)$.

{\bf Assumption (A2).} 
Let $B(x)$ be real-valued, analytic and bounded on $\mathbb{R}$. $A(x)$ and $B(x)$ satisfy for $x \in \mathbb{R}$ either
\begin{align}
A(x) = A(-x), \quad B(x) = -B(-x), \label{sym1}
\intertext{or}
A(x) = -A(-x), \quad B(x) = B(-x). \label{sym2}
\end{align}

The following theorem shows that the eigenvalues of $L_{\varepsilon}$ are real for sufficiently small $\varepsilon$ and $h$.

\begin{theorem} \label{PTeigenvalue}
Assume (A1) and (A2). Then there exist positive constants $\varepsilon_{0}$ and $h_{0}$ such that $\sigma(L_{\varepsilon}) \cap \left\{\lambda \in \mathbb{C}: \left|\lambda-\lambda_{0}\right| < \varepsilon_{0} \right\} \subset \mathbb{R} $ when $0 < \varepsilon \leq \varepsilon_{0}$ and $0 < h \leq h_{0}.$
\end{theorem}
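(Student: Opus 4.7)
The strategy is to combine two ingredients: a $\mathcal{PT}$-like symmetry forcing $\sigma(L_{\varepsilon})$ to be invariant under complex conjugation, and an extension of Theorem~1 to $L_{\varepsilon}$ that produces a unique eigenvalue in a fixed complex neighborhood of each unperturbed one. The PT symmetry alone does not give reality, but combined with a uniqueness statement it will force each eigenvalue to equal its own conjugate.

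First I would introduce an antilinear involution $\mathcal{T}$ on $L^{2}(\mathbb{R})^{2}$: in the symmetry case \eqref{sym1} set $\mathcal{T}\bm{u}(x):=\sigma_{3}\,\overline{\bm{u}(-x)}$ with $\sigma_{3}=\mathrm{diag}(1,-1)$, and in case \eqref{sym2} set $\mathcal{T}\bm{u}(x):=\overline{\bm{u}(-x)}$. A direct computation, using $\overline{A_{\varepsilon}(-x)}=A_{\varepsilon}(x)$ in case \eqref{sym1} and $\overline{A_{\varepsilon}(-x)}=-A_{\varepsilon}(x)$ in case \eqref{sym2}, gives $\mathcal{T}L_{\varepsilon}=L_{\varepsilon}\mathcal{T}$. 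Because $\mathcal{T}$ is antilinear, $L_{\varepsilon}\bm{u}=\lambda\bm{u}$ implies $L_{\varepsilon}(\mathcal{T}\bm{u})=\bar\lambda\,\mathcal{T}\bm{u}$, so $\sigma(L_{\varepsilon})=\overline{\sigma(L_{\varepsilon})}$.

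Next I would redo the exact-WKB derivation of Theorem~1 with the complex potential $A_{\varepsilon}$. For $|\varepsilon|$ and $|\lambda-\lambda_{0}|$ small, $\lambda^{2}-A_{\varepsilon}(t)^{2}$ still has exactly two simple zeros $\alpha_{\varepsilon}(\lambda), \beta_{\varepsilon}(\lambda)$ close to $\alpha_{0}, \beta_{0}$ (now generally complex), depending holomorphically on $(\lambda,\varepsilon)$. Running the same connection analysis on contours deformed off $\mathbb{R}$ so as to pass through these complex turning points, I expect to obtain
\begin{align*}
I_{\varepsilon}(\lambda) \;=\; (k+c_{\pm})\pi h + h^{2}\,\tilde{r}_{\pm}(\lambda,h,\varepsilon),
\end{align*}
with $c_{+}=1/2$, $c_{-}=0$ as in \eqref{qcon1}--\eqref{qcon2}, where $I_{\varepsilon}(\lambda):=\int_{\alpha_{\varepsilon}(\lambda)}^{\beta_{\varepsilon}(\lambda)}\sqrt{\lambda^{2}-A_{\varepsilon}(t)^{2}}\,dt$ is holomorphic in $\lambda$ in a fixed complex disk around $\lambda_{0}$, and $\tilde{r}_{\pm}$ is bounded uniformly in $(\lambda,h,\varepsilon)$.

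The conclusion then follows from the implicit function theorem combined with the symmetry step. Since $I'(\lambda_{0})=\lambda_{0}\int_{\alpha_{0}}^{\beta_{0}}(\lambda_{0}^{2}-A(t)^{2})^{-1/2}\,dt>0$ and $I_{\varepsilon}\to I$ holomorphically as $\varepsilon\to 0$, for each admissible integer $k$ the quantization equation has exactly one zero $\lambda_{k}(\varepsilon,h)$ in the complex disk. This zero is real at $\varepsilon=0$ by Theorem~1; for $\varepsilon>0$, the symmetry step produces the eigenvalue $\overline{\lambda_{k}(\varepsilon,h)}$, which is another zero of the same equation, hence by uniqueness $\overline{\lambda_{k}(\varepsilon,h)}=\lambda_{k}(\varepsilon,h)$. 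Choosing $\varepsilon_{0}, h_{0}$ small, every eigenvalue in $\{|\lambda-\lambda_{0}|<\varepsilon_{0}\}$ is of this form and therefore real. The main obstacle is the WKB extension in the third paragraph: one must verify that the connection formulas and Wronskian relations derived for real $A$ and real turning points adapt uniformly to the complex perturbation $A_{\varepsilon}$ and to complex turning points, with the remainder $\tilde{r}_{\pm}$ controlled uniformly in $\varepsilon$.
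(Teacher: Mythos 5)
Your overall architecture matches the paper's first ingredient: both you and the author extend the exact WKB quantization condition to the complex potential $A_{\varepsilon}$ with complex turning points (the paper's Lemma 4.1), and you correctly identify this as the main technical burden. Your antilinear involution $\mathcal{T}$ is sound (it is the operator-level version of the paper's Lemma \ref{WKBsym}), and it does give $\sigma(L_{\varepsilon})=\overline{\sigma(L_{\varepsilon})}$. Where you diverge is in how the symmetry is cashed in: the paper never argues by ``conjugate eigenvalue plus uniqueness.'' Instead it proves the Schwarz-type reflection identities $\overline{I(\overline{\lambda},\varepsilon)}=I(\lambda,\varepsilon)$ (Lemma \ref{acsym}) and $r(\lambda,\varepsilon,h)=\overline{r(\overline{\lambda},\varepsilon,h)}$ for the WKB remainder, so that the full quantization function $I(\lambda,\varepsilon)+h^{2}r(\lambda,\varepsilon,h)$ is holomorphic, real on the real axis, and has nonvanishing derivative at $\lambda_{0}$; its local inverse therefore sends the real quantization values $c_{k}\pi h$ to real eigenvalues, with no counting argument needed.

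The genuine gap in your version is the step ``$\overline{\lambda_{k}(\varepsilon,h)}$ is another zero of the same equation, hence by uniqueness it equals $\lambda_{k}(\varepsilon,h)$.'' The spectral symmetry only tells you that $\overline{\lambda_{k}}$ satisfies the quantization condition for \emph{some} integer $k'$; you must still show $k'=k$. The natural route is to bound $|\mathrm{Im}\,\lambda_{k}|$ by half the eigenvalue spacing, which is of order $h$. But without knowing that $I_{\varepsilon}$ is real on the real axis, the imaginary part of $I_{\varepsilon}$ at real $\lambda$ is a priori only $O(\varepsilon)$ (the perturbation $i\varepsilon B$ enters at first order), so you only get $|\mathrm{Im}\,\lambda_{k}|=O(\varepsilon)+O(h^{2})$. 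Since the theorem asserts reality for all $0<\varepsilon\le\varepsilon_{0}$ and $0<h\le h_{0}$ independently --- in particular for $\varepsilon\gg h$ --- this estimate does not rule out $\overline{\lambda_{k}}=\lambda_{k'}$ with $k'\ne k$, and your uniqueness argument does not close. Two repairs are available: (i) prove the reality of $I(\lambda,\varepsilon)$ on $\mathbb{R}$ under (A2), i.e.\ the paper's Lemma \ref{acsym} (the substitution $t\mapsto-\overline{t}$ maps the pair of turning points to itself), together with the corresponding symmetry of the remainder, which gives $|\mathrm{Im}\,\lambda_{k}|=O(h^{2})\ll\pi h$; or (ii) a continuity argument in $\varepsilon$: the label $k'(\varepsilon)$ of $\overline{\lambda_{k}(\varepsilon,h)}$ is locally constant because the eigenvalues stay uniformly separated by $\asymp h$, and $k'(0)=k$ since the spectrum is real at $\varepsilon=0$. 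You gesture at $\varepsilon=0$ but do not carry out either repair, so as written the proof is incomplete precisely in the regime the theorem is designed to cover.
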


\ \ \ To prove Theorem 1.1 and 1.2, we use the exact WKB method. In Section 2, we mention the exact WKB solutions for (\ref{ZSeq}) and introduce three important properties. These exact WKB solutions are used in Section 3 to derive the quantization conditions (\ref{qcon1}) and (\ref{qcon2}). After that, we consider the perturbed case, and give the proof for Theorem 1.2 in Section 4.


\section{Exact WKB solutions}
\ \ \ We construct solutions to (\ref{ZSeq}) by the exact WKB method. This method was proposed by G\'{e}rard and Grigis in [7], and extended to $2 \times 2$ systems by Fujii\'{e}, Lasser and N\'{e}d\'{e}lec in [5]. 

\ \ \ Before the construction, we assume that $\Omega$ is a simply connected open subset of $D$, where $A(x)^{2}-\lambda^{2}$ does not vanish. Following [5], we can construct exact WKB solutions for (\ref{ZSeq}) in the form
\begin{align} \label{WKBsolution}
{\bm u}^{\pm}(x,h;\gamma,x_{0}) = 
\begin{pmatrix}
1 & 1 \\
-i & i
\end{pmatrix}
 e^{\pm z(x;\gamma)/h}Q(x)
\begin{pmatrix}
0 & 1 \\
1 & 0 
\end{pmatrix}
^{\frac{1\pm1}{2}} {\bm w}^{\pm}(x,h;x_{0})
\end{align}
with base points $\gamma \in D$ and  $x_{0} \in \Omega$. Here, $z(x;\gamma)$ is a phase function
\begin{align*}
z(x;\gamma) :=  \int_{\gamma}^{x}\sqrt{A(t)^{2}-\lambda^{2}}dt,
\end{align*}
$Q(x)$ is a $2 \times 2$ matrix function
\begin{align}
 Q(x) =
\begin{pmatrix}
H(x)^{-1} & H(x)^{-1} \\
iH(x) & -iH(x)
\end{pmatrix},\quad
H(x) = \left(\frac{A(x)+\lambda}{A(x)-\lambda}\right)^{\frac{1}{4}},
\end{align}
and ${\bm w}^{\pm}(x,h;x_{0})$ are the series
\begin{align*}
{\bm w}^{\pm}(x,h;x_{0}) = 
\begin{pmatrix}
w_{even}^{\pm}(x,h;x_{0})\\
w_{odd}^{\pm}(x,h;x_{0})
\end{pmatrix}
:=
 \sum_{n=0}^{\infty}
\begin{pmatrix}
w_{2n}^{\pm}(x,h)\\
w_{2n-1}^{\pm}(x,h)
\end{pmatrix}
\end{align*}
constructed by the recurrence equations
\renewcommand{\arraystretch}{1.7}
\begin{gather}
  \left\{
    \begin{array}{l}
      w_{-1}^{\pm} = 0,~~~w_{0}^{\pm} = 1, \\
\displaystyle
      \frac{d}{dx}w_{2n}^{\pm} =c(x)w_{2n-1}^{\pm},\qquad c(x) =\frac{H^{\prime}(x)}{H(x)},\\
\displaystyle
	\left(\frac{d}{dx} \pm \frac{2z^{\prime}}{h}\right)w_{2n-1}^{\pm} = c(x)w_{2n-2}^{\pm},
    \end{array}
  \right.
\label{diffx}
\end{gather}
\renewcommand{\arraystretch}{1.0}
and the initial conditions
\begin{align}
w_{n}^{\pm} |_{x=x_{0}}= 0 \quad (n \geq 1).  \label{init-z}
\end{align}

These solutions constructed above formally satisfy (\ref{ZSeq}). We recall here the following three propositions. The proofs are found in [5] or [7]. The first is about the convergence of series.

\begin{Prop}
Two series $w^{\pm}_{even}(x,h;x_{0})$ and $w^{\pm}_{odd}(x,h;x_{0})$ are absolutely convergent in a neighborhood of $x_{0}$. Furthermore, $w^{\pm}_{even}(x,h;x_{0})$ and $w^{\pm}_{odd}(x,h;x_{0})$ are analytic functions in $\Omega$.
\end{Prop}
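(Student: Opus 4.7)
The plan is to convert the recurrences (\ref{diffx}) into Volterra-type integral equations and then apply a Picard iteration estimate, with the delicate point being the choice of an integration path that neutralizes the exponential factor in the $h \to 0^+$ regime.

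First I would integrate the even-indexed recurrence against the initial condition (\ref{init-z}) to get
\begin{align*}
w_{2n}^{\pm}(x) = \int_{x_{0}}^{x} c(s)\, w_{2n-1}^{\pm}(s)\, ds,
\end{align*}
and solve the odd-indexed first-order linear ODE using the integrating factor $e^{\pm 2 z(\cdot)/h}$ to get
\begin{align*}
w_{2n-1}^{\pm}(x) = \int_{x_{0}}^{x} e^{\mp 2(z(x)-z(s))/h}\, c(s)\, w_{2n-2}^{\pm}(s)\, ds.
\end{align*}
Iterating these two identities writes $w_{2n}^{\pm}(x)$ as a $(2n)$-fold nested integral of a product of $c(s_{j})$'s and $n$ exponentials of the above type.

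The key geometric step is then to fix, for each target point $x$, a path $\gamma_{x}$ from $x_{0}$ to $x$ along which the real-valued function $s \mapsto \pm \mathrm{Re}\, z(s)$ is nondecreasing; on such a path every exponential factor satisfies $|e^{\mp 2(z(s_{j})-z(s_{j+1}))/h}| \leq 1$ uniformly in $h>0$. This is possible because $z'(s) = \sqrt{A(s)^{2}-\lambda^{2}}$ is analytic and non-vanishing on $\Omega$, so $z$ is a local biholomorphism and such monotone paths always exist locally; for the first claim a short straight segment near $x_{0}$ already works. With the exponentials neutralized, induction on $n$ yields the Picard bound
\begin{align*}
|w_{n}^{\pm}(x)| \leq \frac{1}{n!}\left(\int_{\gamma_{x}}|c(s)|\,|ds|\right)^{n},
\end{align*}
which immediately gives absolute, uniform convergence of both series on a neighborhood of $x_{0}$.

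For the analyticity claim in $\Omega$, each partial sum $\sum_{k=0}^{N} w_{n}^{\pm}$ is analytic in $\Omega$ because the nested integrals of the analytic functions $c(s)$ and $e^{\mp 2 z(s)/h}$ are path-independent on the simply connected set $\Omega$. Covering any compact $K \subset \Omega$ by finitely many paths of the above monotone type (reaching each $x \in K$ by following the image of $z$ along a segment with appropriately signed real part, then pulling back) provides the same estimate uniformly on $K$, and Weierstrass's theorem then yields analyticity of $w_{even}^{\pm}$ and $w_{odd}^{\pm}$ on $\Omega$. The main obstacle is this path construction: the local case is routine, but the global statement relies essentially on the non-vanishing of $z'$ and simple connectedness of $\Omega$, which together guarantee that any point of $\Omega$ can be reached from $x_{0}$ by a path of monotone $\pm\mathrm{Re}\, z$.
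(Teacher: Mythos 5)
The paper does not prove this proposition itself --- it refers to [5] and [7] --- but the strategy used there is exactly the one you start with: rewrite (\ref{diffx}) as Volterra integral equations and estimate the nested integrals. Your local statement and your Picard bound are fine as far as they go. The genuine gap is in the final step, where you claim that ``any point of $\Omega$ can be reached from $x_0$ by a path of monotone $\pm\mathrm{Re}\,z$'' because $z'$ is non-vanishing and $\Omega$ is simply connected. This is false, and in fact it would make Definition 2.3 and Theorem 2.4 of the paper vacuous: the sets $\Omega_\pm$ of points reachable by such monotone paths are introduced precisely because they are in general \emph{proper} subsets of $\Omega$, bounded by the Stokes geometry. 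The simplest counterexample is $A(t)^2-\lambda^2\equiv 1$, so $z(x)=x-\gamma$ and $\mathrm{Re}\,z=\mathrm{Re}\,x$: no path from $x_0$ to any point $x$ with $\mathrm{Re}\,x<\mathrm{Re}\,x_0$ has $\mathrm{Re}\,z$ nondecreasing. Even your local claim that ``a short straight segment near $x_0$ already works'' only covers the half-neighborhood where $\pm\mathrm{Re}\,z(x)\geq\pm\mathrm{Re}\,z(x_0)$.

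The fix is that monotone paths are simply not needed for convergence and analyticity; they are needed only for the \emph{uniform-in-$h$} bounds behind the asymptotic expansion of Theorem 2.4, which is a different statement. For the present proposition one fixes $h>0$ and a compact $K\subset\Omega$, joins $x_0$ to $x\in K$ by any path in a slightly larger compact $K'$, and bounds every exponential factor by $e^{C_{K'}/h}$ with $C_{K'}=2\sup_{s,s'\in K'}|\mathrm{Re}\,z(s)-\mathrm{Re}\,z(s')|$. The simplex volume factor $1/n!$ coming from the $n$ nested integrations then gives $|w_n^{\pm}(x)|\leq \frac{1}{n!}\bigl(e^{C_{K'}/h}\int_{\gamma_x}|c|\,|ds|\bigr)^{n}$, which still sums absolutely, uniformly on $K$, for every fixed $h$. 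Since each $w_n^{\pm}$ is analytic on the simply connected set $\Omega$ (path-independence of the integrals of analytic integrands), Weierstrass's theorem gives analyticity of the sums on all of $\Omega$, with no restriction to $\Omega_\pm$. With this replacement your argument becomes correct and is essentially the proof in [5] and [7].
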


The second property is about the Wronskian for two different types of exact WKB solutions.
\begin{Prop} \label{wron}
Let $\gamma, x_{0} ,x_{1} \in \Omega$ be the base points. Then, the exact WKB solutions ${\bm u}^{\pm}(x,h;\gamma,x_{0})$ and ${\bm u}^{\pm}(x,h;\gamma,x_{1})$ satisfy
\begin{align*}
\mathcal{W}\left({\bm u}^{\pm}(x,h;\gamma,x_{0}), {\bm u}^{\mp}(x,h;\gamma,x_{1})\right) = \mp 4 w_{even}^{\pm}(x_{1},h;x_{0}),
\end{align*}
where $\mathcal{W}({\bm f},{\bm g}) := \det({\bm f},{\bm g})$. This is called the Wronskian formula. 
\end{Prop}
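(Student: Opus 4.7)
The plan is to reduce the Wronskian to an $x$-independent quantity and then evaluate it at the convenient point $x = x_{1}$. First, I would rewrite the eigenvalue equation $L{\bm u} = \lambda {\bm u}$ as a first-order linear system ${\bm u}'(x) = M(x){\bm u}(x)$, where
\begin{align*}
M(x) = \frac{1}{h}\begin{pmatrix} -i\lambda & A(x) \\ A(x) & i\lambda \end{pmatrix}.
\end{align*}
Since $\mathrm{tr}\, M(x) \equiv 0$, the Abel-Liouville formula shows that the Wronskian of any two solutions of the system is independent of $x$. Consequently $\mathcal{W}({\bm u}^{\pm}(\cdot,h;\gamma,x_{0}), {\bm u}^{\mp}(\cdot,h;\gamma,x_{1}))$ may be evaluated at any point of $\Omega$, and I would choose $x = x_{1}$.

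At that point the initial conditions (\ref{init-z}) yield ${\bm w}^{\mp}(x_{1},h;x_{1}) = (1,0)^{T}$, so ${\bm u}^{\mp}(x_{1},h;\gamma,x_{1})$ collapses to a single explicit column vector determined by $Q(x_{1})$, the $2 \times 2$ constant prefactor in (\ref{WKBsolution}), the permutation matrix raised to the power $(1\mp 1)/2$, and the scalar $e^{\mp z(x_{1};\gamma)/h}$. The other solution ${\bm u}^{\pm}(x_{1},h;\gamma,x_{0})$ still carries the full series ${\bm w}^{\pm}(x_{1},h;x_{0})$ and the exponential $e^{\pm z(x_{1};\gamma)/h}$. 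Inside the determinant the two exponentials multiply to one, which eliminates the dependence on the phase base point $\gamma$, as it must.

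The proof then reduces to computing a single $2 \times 2$ determinant whose entries are explicit in $H(x_{1})^{\pm 1}$ and in the combinations $w_{even}^{\pm} \pm w_{odd}^{\pm}$ produced by the permutation matrix (the exponent $(1 \pm 1)/2$ decides whether the even or the odd series is moved into the first position). Upon expansion the $H(x_{1})^{\pm 2}$ contributions cancel pairwise, as do the $w_{odd}^{\pm}$ terms, leaving exactly $\mp 4\, w_{even}^{\pm}(x_{1},h;x_{0})$. I do not foresee a genuine obstacle here; the one point that requires care is tracking the permutation matrix correctly, as this is precisely what determines the sign $\mp$ that distinguishes the two cases in the proposition.
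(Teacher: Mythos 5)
Your proof is correct and is essentially the standard argument: the paper itself omits the proof (deferring to [5] and [7]), and those references proceed exactly as you do, using the tracelessness of the first-order system to make the Wronskian $x$-independent and then evaluating at $x=x_{1}$ where the initial conditions (\ref{init-z}) collapse ${\bm w}^{\mp}$ to $(1,0)^{T}$. The determinant computation factors as $\det(PQ)\det(v_{1},v_{2})$ with $P=\bigl(\begin{smallmatrix}1&1\\-i&i\end{smallmatrix}\bigr)$, $\det(PQ)=(2i)(-2i)=4$, and the column $(1,0)^{T}$ (or $(0,1)^{T}$ after the permutation) killing the $w_{odd}$ contribution, which confirms the signs $\mp 4\,w_{even}^{\pm}(x_{1},h;x_{0})$ exactly as you describe.
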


The final proposition is about the asymptotic property of the exact WKB solution. Let $x_{0} \in \Omega $ be fixed. 
\begin{Def}
We denote by $\Omega_{\pm}$ the subset of all x $\in \Omega$ such that there exists a path in  $\Omega$ from $x_{0}$ to $x$ along which $\pm {\rm Re}z(x;x_{0})$ is strictly increasing.
\end{Def}

\begin{theorem} \label{asym}
The functions $w_{even}^{\pm}(x,h;x_{0})$ and $w_{odd}^{\pm}(x,h;x_{0})$ have the asymptotic expansions as $h \rightarrow 0$:
\begin{align*}
w_{even}^{\pm}(x,h;x_{0}) - \sum_{n=0}^{N}w_{2n}^{\pm}\left(x,h;x_{0}\right) &= \mathcal{O}(h^{N+1}),\\
w_{odd}^{\pm}(x,h;x_{0}) - \sum_{n=0}^{N}w_{2n-1}^{\pm}\left(x,h;x_{0}\right) &= \mathcal{O}(h^{N+1}),
\end{align*}
in all compact subsets of $\Omega_{\pm}$.
\end{theorem}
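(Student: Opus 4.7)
The plan is to convert the differential recurrence \eqref{diffx} into integral equations and then extract a power of $h$ at each odd step via one integration by parts. Integrating \eqref{diffx} against the initial conditions $w_n^\pm|_{x=x_0}=0$ ($n\geq 1$) immediately gives
\begin{align*}
w_{2n}^{\pm}(x,h) &= \int_{x_{0}}^{x} c(t)\, w_{2n-1}^{\pm}(t,h)\, dt,\\
w_{2n-1}^{\pm}(x,h) &= \int_{x_{0}}^{x} e^{\mp\tfrac{2}{h}(z(x;x_{0})-z(t;x_{0}))}\, c(t)\, w_{2n-2}^{\pm}(t,h)\, dt,
\end{align*}
where the second formula uses the integrating factor $e^{\pm 2z/h}$ for the first-order ODE, and both integrals may be taken along any curve in $\Omega$ joining $x_{0}$ to $x$.

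For $x$ in a compact set $K\subset\Omega_{\pm}$, I would choose the integration path $\Gamma_{x}\subset\Omega$ so that $\pm{\rm Re}\,z(\cdot;x_{0})$ is strictly increasing along it, so that $|e^{\mp 2(z(x;x_{0})-z(t;x_{0}))/h}|\leq 1$ uniformly on $\Gamma_{x}$. Since $A(t)^{2}-\lambda^{2}$ does not vanish in $\Omega$, $z'(t)$ stays bounded away from $0$ on a neighbourhood of $\Gamma_{x}$, and
\begin{align*}
e^{\mp\tfrac{2}{h}(z(x;x_{0})-z(t;x_{0}))} = \pm\frac{h}{2z'(t)}\,\frac{d}{dt}\,e^{\mp\tfrac{2}{h}(z(x;x_{0})-z(t;x_{0}))}.
\end{align*}
A single integration by parts in the formula for $w_{2n-1}^{\pm}$ therefore produces: a boundary term at $t=x$ equal to $\pm \tfrac{h}{2}\,c(x)\,w_{2n-2}^{\pm}(x)/z'(x)$; an exponentially small boundary term at $t=x_{0}$ (nontrivial only when $n=1$, since $w_{2n-2}^{\pm}(x_{0})=0$ for $n\geq 2$ whereas $w_{0}^{\pm}\equiv 1$, and $\pm{\rm Re}\,z(x;x_{0})>0$ on $\Omega_{\pm}$); and an integral of the same form with $(c\,w_{2n-2}^{\pm}/z')'$ in place of $c\,w_{2n-2}^{\pm}$, carrying an explicit factor $h$. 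Combining this with the trivial bound for $w_{2n}^{\pm}$ and iterating, I would prove by induction on $n$ that, for some constant $C_{K}>0$,
\begin{align*}
\sup_{x\in K}|w_{2n}^{\pm}(x,h;x_{0})| + \sup_{x\in K}|w_{2n-1}^{\pm}(x,h;x_{0})|\ \leq\ \frac{(C_{K}h)^{n}}{n!}.
\end{align*}

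The announced asymptotic expansion then follows by summing the tail: $|w_{\rm even}^{\pm}-\sum_{n=0}^{N}w_{2n}^{\pm}|\leq\sum_{n>N}(C_{K}h)^{n}/n!=\mathcal{O}(h^{N+1})$ uniformly on $K$, and similarly for $w_{\rm odd}^{\pm}$. The main technical obstacle is arranging the family of paths $\{\Gamma_{x}\}_{x\in K}$ so that (i) their lengths are uniformly bounded, (ii) they all lie in a fixed compact subset of $\Omega$ on which $1/z'$ and $(c/z')'$ are bounded, and (iii) the monotonicity of $\pm{\rm Re}\,z(\cdot;x_{0})$ is preserved along each of them. Compactness of $K$ together with the openness of the condition defining $\Omega_{\pm}$ makes this possible, but this is the step that requires genuine attention; once the paths are in hand, the remaining induction is a standard bookkeeping exercise.
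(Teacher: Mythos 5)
Your proposal is correct and is essentially the standard exact WKB argument: the paper itself gives no proof of this theorem but refers to G\'erard--Grigis [7] and Fujii\'e--Lasser--N\'ed\'elec [5], and those references proceed exactly as you do, converting the recurrence \eqref{diffx} into Volterra-type integral equations along paths on which $\pm{\rm Re}\,z$ is increasing, gaining one factor of $h$ per odd step by integration by parts, and summing the tail of the resulting $(C_Kh)^n/n!$ bounds. The point you flag as delicate --- choosing the family of paths uniformly over a compact subset of $\Omega_{\pm}$ --- is indeed the only nontrivial geometric input, and it is handled in those references just as you indicate.
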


To find the domain $\Omega_{\pm}$, we usually consider the Stokes lines, which are level curves of the real part of $z(x;\gamma)$. In particular, the Stokes lines passing through the point $\gamma_{0} \in D$ are defined as the set
\begin{align*}
\left\{ x \in D ; {\rm Re} z(x;\gamma_{0}) = {\rm Re}  \int_{\gamma_{0}}^{x}\sqrt{A(t)^{2}-\lambda^{2}}dt = 0 \right\}.
\end{align*}
Along a path which intersects transversally with the Stokes lines, ${\rm Re}z(x)$ or $-{\rm Re}z(x)$ is strictly increasing.


\section{Quantization condition for the eigenvalues of $L$}

\ \ \ Here we find the quantization condition under Assumption (A1). This is derived from the connection problem of the solutions near the points $\alpha(\lambda)$ and $\beta(\lambda)$ which are zeros of $A(x)^{2} - \lambda^{2}$.

\ \ \ Now, we choose $\alpha(\lambda)$ and $\beta(\lambda)$ for base points of the phase function $z(x)$, and consider the Stokes lines which pass through $\alpha(\lambda)$ and $\beta(\lambda)$. By a simple calculation, we see that the Stokes lines emanate from $\alpha$ at angles of $0, 2\pi/3$ and $4\pi/3$, and emanate from $\beta$ at angles of $\pi/3 ,\pi $ and $5\pi/3$.

\begin{figure}[htbp]
 \begin{center}
  \includegraphics[width=80mm]{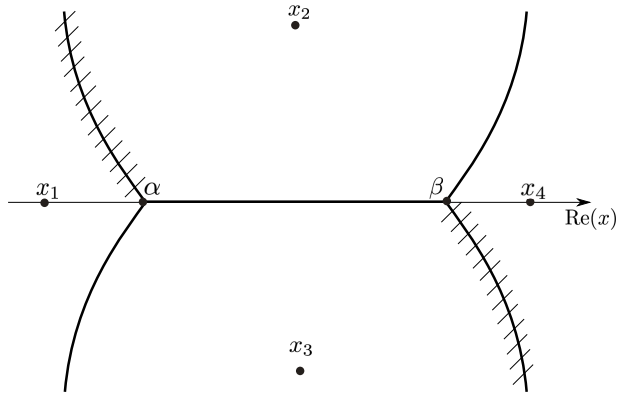}
 \end{center}
 \caption{The Stokes lines and base points}
 \label{fig:stokesone}
\end{figure}

Those Stokes lines separate the complex plane into four sectors as in Figure \ref{fig:stokesone}. As $\sqrt{A(x)^{2}-\lambda^2}$ and $H(x)$ are multi-valued functions on the complex plane with singularities at $\alpha$ and $\beta$, we set branch cuts emanating at an angle of $2\pi/3$ from $\alpha$ and an angle of $5\pi/3$ from $\beta$ respectively. We choose the branches such that ${\rm Re}\sqrt{A(x)^{2}-\lambda^2}$ and $H(x)$ are positive on a part of the real axis ${\rm Re}(x) > \beta$.

\ \ \ We take base points for $\bm{w}^{\pm}(x,h)$ in each sector as in Figure \ref{fig:stokesone}, and define the exact WKB solutions:
\begin{gather}
  \left\{
    \begin{array}{l}
      {\bm u}_{1}={\bm u}^{+}(x,h;\alpha,x_{1}) \\
{\bm u}_{2}={\bm u}^{+}(x,h;\alpha,x_{2}),~~~{\widetilde {\bm u}_{2}}={\bm u}^{+}(x,h;\beta,x_{2}),\\
{\bm u}_{3}={\bm u}^{-}(x,h;\alpha,x_{3}),~~~{\widetilde{\bm u}_{3}}={\bm u}^{-}(x,h;\beta,x_{3}),\\
{\bm u}_{4}={\bm u}^{-}(x,h;\beta,x_{4}).
    \end{array}
  \right. \label{def-wkb27}
\end{gather} 
Then, we represent ${\bm u}_{1}$ as a linear combination of $\bm{u}_{2}$ and $\bm{u}_{3}$:
\begin{align} \label{lc1}
{\bm u_{1}}=c_{2}{\bm u}_{2}+c_{3}{\bm u}_{3},
\end{align}
and ${\bm u_{4}}$ as 
\begin{align} \label{lc2}
{\bm u_{4}}=\widetilde{c}_{2}\widetilde{{\bm u}}_{2}+\widetilde{c}_{3}\widetilde{{\bm u}}_{3},
\end{align}
where each coefficient depends on $h$ and $\lambda$. We calculate those coefficients by using Theorems \ref{wron} and \ref{asym}, and obtain the following.
\begin{lemma} \label{lemmaA-1}
Assume (A1).  In the two cases $A(\alpha) = \pm A(\beta)$, the connection coefficients $c_{i}$ and $\widetilde{c_{j}}\  (i,j \in \{2,3\})$ satisfy
\begin{align*}
c_{2}{\widetilde c}_{3} = 1+\mathcal{O}(h), ~~~{\widetilde c}_{2} c_{3} = \mp 1+\mathcal{O}(h).
\end{align*}
as $h \rightarrow 0$.
\end{lemma}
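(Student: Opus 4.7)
My plan is to express each connection coefficient as a ratio of Wronskians via Cramer's rule and then to evaluate these Wronskians asymptotically using Proposition \ref{wron} and Theorem \ref{asym}. Taking the Wronskian of (\ref{lc1}) with $\bm{u}_3$ gives $c_2 = \mathcal{W}(\bm{u}_1,\bm{u}_3)/\mathcal{W}(\bm{u}_2,\bm{u}_3)$, and similarly from (\ref{lc2}) one obtains $\tilde c_3 = \mathcal{W}(\tilde{\bm{u}}_2,\bm{u}_4)/\mathcal{W}(\tilde{\bm{u}}_2,\tilde{\bm{u}}_3)$. Each of these four Wronskians pairs a $\bm{u}^+$ with a $\bm{u}^-$ sharing a common $\gamma$-base ($\alpha$ or $\beta$), so Proposition \ref{wron} expresses each as $-4\,w^+_{even}$ at the relevant pair of base points. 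From Figure \ref{fig:stokesone} one checks the reachability conditions $x_3 \in \Omega_+(x_1)$ (via the Stokes half-line from $\alpha$ at angle $4\pi/3$), $x_3 \in \Omega_+(x_2)$ (via the segment $(\alpha,\beta)$) and $x_4 \in \Omega_+(x_2)$ (via the half-line from $\beta$ at angle $\pi/3$); Theorem \ref{asym} then yields that each $w^+_{even}$ equals $1 + \mathcal{O}(h)$. Multiplying $c_2$ and $\tilde c_3$ therefore gives $c_2 \tilde c_3 = 1 + \mathcal{O}(h)$ independently of the sign of $A(\alpha)/A(\beta)$.

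For the product $c_3 \tilde c_2$ the analogous Cramer expressions require $\mathcal{W}(\bm{u}_1,\bm{u}_2)$ and $\mathcal{W}(\bm{u}_4,\tilde{\bm{u}}_3)$, which pair two solutions of the same $\pm$ type and fall outside Proposition \ref{wron}. My strategy is to compute the long Wronskian $\mathcal{W}(\bm{u}_1,\bm{u}_4)$ in two different ways. Setting $\eta := z(\beta;\alpha)$, which is purely imaginary for real $\lambda$ close to $\lambda_0$, the scalar identity $\bm{u}^{\pm}(x,h;\alpha,x_0) = e^{\pm\eta/h}\bm{u}^{\pm}(x,h;\beta,x_0)$ rewrites $\bm{u}_1 = e^{\eta/h}\bm{u}^{+}(x,h;\beta,x_1)$, and Proposition \ref{wron} applied at the common base $\beta$ then gives $\mathcal{W}(\bm{u}_1,\bm{u}_4) = -4\,e^{\eta/h}w^+_{even}(x_4,h;x_1)$. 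On the other hand, substituting $\bm{u}_2 = e^{\eta/h}\tilde{\bm{u}}_2$ and $\bm{u}_3 = e^{-\eta/h}\tilde{\bm{u}}_3$ into (\ref{lc1}) and pairing with (\ref{lc2}) in the Wronskian gives $\mathcal{W}(\bm{u}_1,\bm{u}_4) = (c_2\tilde c_3 e^{\eta/h} - c_3 \tilde c_2 e^{-\eta/h})\mathcal{W}(\tilde{\bm{u}}_2,\tilde{\bm{u}}_3)$. Equating both expressions and dividing by $e^{\eta/h}\mathcal{W}(\tilde{\bm{u}}_2,\tilde{\bm{u}}_3)$ yields
\begin{align*}
c_2 \tilde c_3 - c_3 \tilde c_2\, e^{-2\eta/h} = \frac{w^+_{even}(x_4,h;x_1)}{w^+_{even}(x_3,h;x_2)}.
\end{align*}

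The main obstacle is to determine the leading asymptotic of $w^+_{even}(x_4,h;x_1)$. Unlike the Wronskians treated above, this one is not covered by Theorem \ref{asym}: on the physical sheet cut from $\alpha$ at angle $2\pi/3$ and from $\beta$ at angle $5\pi/3$, no path in $\Omega$ from $x_1$ to $x_4$ keeps $\mathrm{Re}\,z(\cdot;\alpha)$ monotone, so $x_4 \notin \Omega_+(x_1)$. The hard work consists in analytically continuing the exact WKB series past the Stokes lines and tracking the Stokes multipliers picked up at each turning point. Because $A(x)^2 - \lambda^2$ vanishes simply at $\alpha$ and $\beta$, the local model near each is the Airy equation, and the associated multiplier is $\pm 1$ with sign dictated by the branch of $H(x) = ((A+\lambda)/(A-\lambda))^{1/4}$ there. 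This is precisely where the dichotomy $A(\alpha) = \pm A(\beta)$ enters: in the case $A(\alpha) = A(\beta)$ the two Stokes contributions combine constructively, while in the case $A(\alpha) = -A(\beta)$ they combine destructively. Carrying the analysis through yields $w^+_{even}(x_4,h;x_1)/w^+_{even}(x_3,h;x_2) = 1 \pm e^{-2\eta/h} + \mathcal{O}(h)$, with the upper sign for $A(\alpha) = A(\beta)$ and the lower sign for $A(\alpha) = -A(\beta)$. Substituting into the identity above and using $|e^{-2\eta/h}| = 1$ then gives $c_3 \tilde c_2 = \mp 1 + \mathcal{O}(h)$ with the correct sign, completing the proof.
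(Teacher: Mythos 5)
Your computation of $c_2\widetilde{c}_3$ is exactly the paper's: Cramer's rule, the Wronskian formula of Proposition \ref{wron} (each relevant Wronskian pairs a $+$ with a $-$ solution sharing a base point $\gamma$), and Theorem \ref{asym} along paths crossing the Stokes lines. That half is fine. The problem is the second half. Your two-fold evaluation of $\mathcal{W}(\bm{u}_1,\bm{u}_4)$ is a correct identity, but it only trades the unknown $c_3\widetilde{c}_2$ for the unknown ratio $w^+_{even}(x_4,h;x_1)/w^+_{even}(x_3,h;x_2)$: since $|e^{-2\eta/h}|=1$, determining $c_3\widetilde{c}_2$ modulo $\mathcal{O}(h)$ is \emph{equivalent} to determining that ratio modulo $\mathcal{O}(h)$. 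At precisely this point you write ``carrying the analysis through yields $1\pm e^{-2\eta/h}+\mathcal{O}(h)$,'' which is the statement of the lemma in disguise, justified only by an appeal to an Airy local model and Stokes multipliers ``$\pm 1$'' that appear nowhere in the paper's framework and are not derived. The sign dichotomy between $A(\alpha)=A(\beta)$ and $A(\alpha)=-A(\beta)$ is the entire content of the lemma, and your argument never actually produces it.

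The paper closes this gap by a monodromy computation rather than a Stokes-multiplier analysis: it evaluates $\mathcal{W}(\bm{u}_1,\bm{u}_2)$ and $\mathcal{W}(\bm{u}_4,\widetilde{\bm{u}}_3)$ directly (these pair two solutions of the \emph{same} exponential type, so Proposition \ref{wron} does not apply as written) by rotating the base point by $-2\pi$ around the turning point across the branch cut. Under $\hat{x}-\alpha=e^{-2\pi i}(x-\alpha)$ one has $z(x;\alpha)=-z(\hat{x};\alpha)$, $\bm{w}^+(x,h;x_2)=\bm{w}^-(\hat{x},h;\hat{x}_2)$, and $H(x)=\mp iH(\hat{x})$ according as $A(\alpha)=\pm\lambda$; hence $\bm{u}_2=\pm i\,\bm{u}^-(\hat{x},h;\alpha,\hat{x}_2)$, which converts the same-type Wronskian into one covered by Proposition \ref{wron}. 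Note the turning-point factor is $\pm i$, not $\pm1$, and the sign of $i$ at each of $\alpha$ and $\beta$ is exactly what encodes whether $A=\lambda$ or $A=-\lambda$ there; the product of the two factors gives $i^2=-1$ when $A(\alpha)A(\beta)>0$ and $-i^2=+1$ when $A(\alpha)A(\beta)<0$. If you want to salvage your route through $w^+_{even}(x_4,h;x_1)$, you would still have to perform this same continuation argument to justify your claimed asymptotics, so as written the proposal has a genuine gap at its decisive step.
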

\begin{proof}
Each coefficient is represented in terms of the Wronskians as
\begin{align*}
c_{2} = \frac{\mathcal{W}({\bm u}_{1},{\bm u}_{3})}{\mathcal{W}({\bm u}_{2},{\bm u}_{3})},~~~c_{3} = \frac{\mathcal{W}({\bm u}_{1},{\bm u}_{2})}{\mathcal{W}({\bm u}_{3},{\bm u}_{2})},\\
{\widetilde c}_{2} = \frac{\mathcal{W}({\bm u}_{4},{\widetilde {\bm u}_{3}})}{\mathcal{W}({\widetilde {\bm u}_{2}},{\widetilde {\bm u}_{3}})},~~~{\widetilde c}_{3} = \frac{\mathcal{W}({\bm u}_{4},{\widetilde {\bm u}_{2}})}{\mathcal{W}({\widetilde {\bm u}_{3}},{\widetilde {\bm u}_{2}})}.
\end{align*}
For $c_{2}$ and $\widetilde{c_{3}}$, we see that
\begin{align} \label{c2tilc3}
c_{2} = \frac{w_{even}^{+}(x_{3},h;x_{1})}{w_{even}^{+}(x_{3},h;x_{2})},~~\widetilde{c_{3}}=\frac{w_{even}^{+}(x_{4},h;x_{2})}{w_{even}^{+}(x_{3},h;x_{2})}.
\end{align}
by Theorem {\rm \ref{wron}}.

\ \ \ Let $\Gamma(x_{i},x_{j})$ denote a path from $x_{i}$ to $x_{j}$. We take $\Gamma(x_{1},x_{3})$, $\Gamma(x_{2},x_{3})$ and $\Gamma(x_{2},x_{4})$, and then notice that they intersect the Stokes lines, see Figure \ref{fig:stokesB}. Moreover, ${\rm Re}z(x,\cdot)$ increases as ${\rm Re}(x)$ increases, or ${\rm Im}(x)$ decreases. Therefore, ${\rm Re}z(x,\cdot)$ is strictly increasing along those paths.
\begin{figure}[htbp]
 \begin{center}
  \includegraphics[width=75mm]{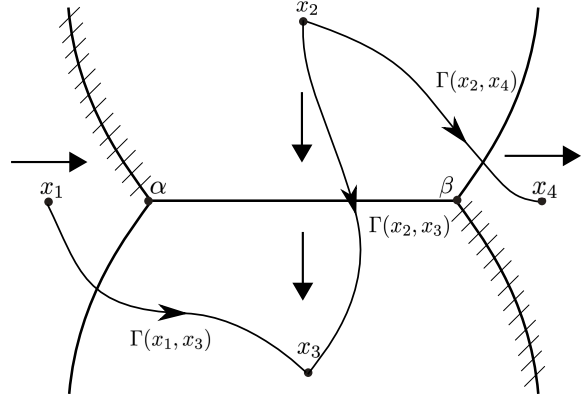}
 \end{center}
 \caption{Examples of $\Gamma(x_{i},x_{j})$. Arrows indicate directions along which ${\rm Re}z(x)$ increases.}
 \label{fig:stokesB}
\end{figure}

According to Theorem {\rm \ref{asym}}, we obtain 
\begin{align*}
w_{even}^{+}(x_{3},h;x_{1}) &= 1+\mathcal{O}(h), ~~w_{even}^{+}(x_{3},h;x_{2}) = 1+\mathcal{O}(h),\\ w_{even}^{+}(x_{4},h;x_{2}) &= 1+\mathcal{O}(h)
\end{align*}
as $h \rightarrow 0$, and we see that
\begin{align*}
c_{2} {\widetilde c}_{3}  = \frac{1+\mathcal{O}(h)}{1+\mathcal{O}(h)} \frac{1+\mathcal{O}(h)}{1+\mathcal{O}(h)} =1+\mathcal{O}(h) \qquad (h \rightarrow 0)
\end{align*}
from ({\rm \ref{c2tilc3}}). This holds in both cases $A(\alpha) = \pm A(\beta)$.

\ \ \ To  calculate the Wronskian $\mathcal{W}({\bm u}_{1},{\bm u}_{2})$, we recall that there exists a branch cut between $x_{1}$ and $x_{2}$. For this, we have to represent $\bm{u}_{1}$ or $\bm{u}_{2}$ by different branches. Let $\hat{x}$ denote a point obtained by rotating $x$ by the angle of  $-2\pi$ around $\alpha$, that is,
\begin{align*}
\hat{x}-\alpha = e^{-2\pi i}(x-\alpha).
\end{align*}

Then, we rewrite $\bm{u}_{2}$ in terms of $\hat{x}$. When $A(\alpha) = \lambda$,
\begin{align*}
\sqrt{A(x)- \lambda} = \sqrt{e^{2\pi i}\left(A(\hat{x})- \lambda\right)} = -\sqrt{\left(A(\hat{x})- \lambda\right)}.
\end{align*}
On the other hand, when $A(\alpha) = -\lambda$,
\begin{align*}
\sqrt{A(x)+ \lambda} = \sqrt{e^{2\pi i}\left(A(\hat{x})+\lambda\right)} = -\sqrt{\left(A(\hat{x})+\lambda\right)}.
\end{align*}
Therefore, there is a sign change
\begin{align}
+ z(x;\alpha) = - z(\hat{x};\alpha) \label{z}
\end{align}
in both cases  $A(\alpha) = \pm \lambda$. Since the sign of $z(x)$ changes and $c(x)=c(\hat{x})$, we find from the recurrence equation {\rm (\ref{diffx})} that 
\begin{align}
{\bm w}^{+}(x,h;x_{2})  = {\bm w}^{-}(\hat{x},h;\hat{x_{2}}). \label{w}
\end{align}

\ \ \ The representation of the function $H(x)$ is different in the cases where $A(\alpha) = \lambda$ or $A(\alpha) = -\lambda$. 
When $A(\alpha) = \lambda$,
\begin{align*}
H(x) = \left(\frac{A(x)+\lambda}{A(x)-\lambda}\right)^{\frac{1}{4}} = \left(\frac{1}{e^{2\pi i}}\frac{A(\hat{x})+\lambda}{A(\hat{x})-\lambda}\right)^{\frac{1}{4}} =  e^{-\frac{\pi i}{2}}\left(\frac{A(\hat{x})+\lambda}{A(\hat{x})-\lambda}\right)^{\frac{1}{4}} .
\end{align*}
By contrast, when $A(\alpha) = -\lambda$,
\begin{align*}
H(x) = \left(\frac{A(x)+\lambda}{A(x)-\lambda}\right)^{\frac{1}{4}} = \left(e^{2\pi i}\frac{A(\hat{x})+\lambda}{A(\hat{x})-\lambda}\right)^{\frac{1}{4}} =  e^{\frac{\pi i}{2}}\left(\frac{A(\hat{x})+\lambda}{A(\hat{x})-\lambda}\right)^{\frac{1}{4}} .
\end{align*}
That is, $H(x) = \mp iH(\hat{x})$ holds with $A(\alpha) = \pm \lambda$. In addition, this leads to
\begin{align} \label{Q}
Q(x) = \pm iQ(\hat{x})
\begin{pmatrix}
0 & 1 \\
1 & 0
\end{pmatrix}.
\end{align}

From {\rm (\ref{z}), (\ref{w}), and (\ref{Q})}, we can rewrite $\bm{u}_{2}$ as
$${\bm u_{2}} =\pm i{\bm u}^{-}(\hat{x},h;\alpha,\hat{x_{2}}),$$
 and obtain
\begin{align*}
\mathcal{W}({\bm u}_{1},{\bm u}_{2})&=\mathcal{W}\left({\bm u}^{+}(x,h;\alpha,x_{1}), \pm i {\bm u}^{-}(\hat{x},h;\alpha,\hat{x_{2}})\right) \\
&=\mp 4i\det Q w_{even}^{+}(\hat{x_{2}},h;x_{1}).
\end{align*}
in each case $A(\alpha) = \pm \lambda$.

\ \ \ In the same way, we represent $\widetilde{\bm {u}_{3}}$ by the other branch to calculate $\mathcal{W}({\bm u}_{4},\widetilde{{\bm u}}_{3})$. Let $\widetilde{x}$ denote a point obtained by rotating $x$ by the angle of  $-2\pi$ around $\beta$. When $A(\beta) = \pm \lambda$, $\widetilde{\bm{u}_{3}}$ is rewritten as
\begin{align*}
\widetilde{{\bm u}_{3}}=\pm i{\bm u}^{+}(\widetilde{x},h;\beta,\widetilde{x_{3}}).
\end{align*}
Therefore, $\mathcal{W}({\bm u}_{4},\widetilde{{\bm u}}_{3})$ is calculated as
\begin{align*}
\mathcal{W}({\bm u}_{4},\widetilde{{\bm u}}_{3})
&=-\mathcal{W}\left(\pm i{\bm u}^{+}(x,h;\beta,\widetilde{x_{3}}),{\bm u}^{-}(x,h;\beta,{x_{4}})\right)\\
&=\pm 4i\det Q w_{even}^{+}(x_{4},h;\widetilde{x_{3}}).
\end{align*}

\ \ \ We can find $\Gamma(x_{1},\hat{x_{2}})$ and  $\Gamma(\widetilde{x_{3}},x_{4})$ along which ${\rm Re}z(x,\cdot)$ strictly increasing, and obtain
\begin{align*}
w_{even}^{+}(\hat{x_{2}},h;x_{1}) = 1+\mathcal{O}(h), \quad w_{even}^{+}(x_{4},h;\widetilde{x_{3}}) = 1+\mathcal{O}(h)
\end{align*}
as $h \rightarrow 0$.

\ \ \ As a result, we obtain that when $A(\alpha)A(\beta)>0$, 
\begin{align*}
c_{3}\widetilde{c_{2}}  &=i^{2} \frac{w_{even}^{+}(\hat{x_{2}},h;x_{1})}{w_{even}^{+}(x_{3},h;x_{2})}\frac{ w_{even}^{+}(x_{4},h;\widetilde{x_{3}})}{w_{even}^{+}(x_{3},h;x_{2})} = - 1+ \mathcal{O}(h)
\intertext{as $h \rightarrow 0$. In the case $A(\alpha)A(\beta)<0$,}
c_{3}\widetilde{c_{2}}  &= -i^{2}\frac{w_{even}^{+}(\hat{x_{2}},h;x_{1})}{w_{even}^{+}(x_{3},h;x_{2})}\frac{w_{even}^{+}(x_{4},h;\widetilde{x_{3}})}{w_{even}^{+}(x_{3},h;x_{2})} = 1+ \mathcal{O}(h)
\end{align*}
as $h \rightarrow 0$.
\end{proof}

\ \ \ Here we return to equation (\ref{ZSeq}). The spectral parameter $\lambda$ near $\lambda_{0}$ is an eigenvalue of $L$ if and only if $\bm{u}_{1}$ and $\bm{u}_{4}$ are linearly dependent, since $\bm{u}_{1} \in L^{2}(\mathbb{R}_{-})$ and $\bm{u}_{4} \in L^{2}(\mathbb{R}_{+})$. That is, we consider the condition 
\begin{align}
\mathcal{W}({\bm u}_{1},{\bm u}_{4})=0. \label{quant2}
\end{align}
From (\ref{lc1}) and (\ref{lc2}), we know that the Wronskian $\mathcal{W}({\bm u}_{1},{\bm u}_{4})$ is expressed in terms of $\bm{u}_{j}$ and $\widetilde{\bm{u}_{k}}$ $(j ,k \in \left\{2,3\right\})$ as
\begin{align*}
\mathcal{W}({\bm u}_{1},{\bm u}_{4}) = c_{2}{\widetilde c}_{3}\mathcal{W}({\bm u}_{2}, {\widetilde {\bm u}_{3}})-{\widetilde c}_{2}c_{3}\mathcal{W}({\widetilde {\bm u}_{2}},{\bm u}_{3}). 
\end{align*}
Since $\bm{u}_{j}$ and $\widetilde{\bm{u}_{j}}$ are linearly dependent and satisfy
\begin{align*}
{\widetilde {\bm u}_{2}}=e^{-iI(\lambda)/h}  {\bm u_{2}},~
{\widetilde {\bm u}_{3}}=e^{iI(\lambda)/h}  {\bm u_{3}},
\end{align*}
condition (\ref{quant2}) is equivalent to
\begin{align*}
c_{2}{\widetilde c}_{3}e^{iI(\lambda)/h}-{\widetilde c}_{2}c_{3}e^{-iI(\lambda)/h}=0.
\end{align*}
That is, $I(\lambda)$ satisfies
\begin{align*}
I(\lambda) + \frac{h}{2i} \log \left(-\frac{c_{2}{\widetilde c}_{3}}{{\widetilde c}_{2}c_{3}}\right)= \left(k+\frac{1}{2}\right)\pi h
\end{align*}
for some integer $k$.

\ \ \ From Lemma  \ref{lemmaA-1}, when $A(\alpha)A(\beta)>0$,
\begin{align*}
\hspace{1.5cm} \log \left(-\frac{c_{2}{\widetilde c}_{3}}{{\widetilde c}_{2}c_{3}}\right) &= \log\left(1+\mathcal{O}(h)\right) = \mathcal{O}(h)  \qquad (h \rightarrow 0).
\intertext{On the other hand, when $A(\alpha)A(\beta)<0$,}
\log \left(-\frac{c_{2}{\widetilde c}_{3}}{{\widetilde c}_{2}c_{3}}\right) &= \log\left(-1+\mathcal{O}(h)\right) =
\pi i + \mathcal{O}(h)  \qquad (h \rightarrow 0).
\end{align*}
In conclusion, the quantization condition for eigenvalues $\lambda$ is given by
\begin{align*}
\hspace{2.5cm} I(\lambda)  &= \left(k+\frac{1}{2}\right)\pi h + \mathcal{O}(h^{2}) \qquad (h \rightarrow 0),
\intertext{in the case $A(\alpha)A(\beta)>0$. Similarly, we obtain}
I(\lambda) &= k\pi h + \mathcal{O}(h^2) \qquad (h \rightarrow 0),
\end{align*}
in the case $A(\alpha)A(\beta)<0$.

\section{Eigenvalue problem for the non-self-adjoint case}
\ \ \ In this section, we consider the eigenvalue problem:
\begin{align} \label{ZSeq2}
L_{\varepsilon}\bm{u}(x) = \lambda \bm{u}(x), \quad
L_{\varepsilon}:=
\begin{pmatrix}
\displaystyle ih\frac{d}{dx} &  -iA_{\varepsilon}(x) \\
 iA_{\varepsilon}(x) & \displaystyle -ih\frac{d}{dx}
\end{pmatrix},
\end{align}
for $A_{\varepsilon}(x) = A(x) + i \varepsilon B(x)$ with $\varepsilon > 0$. First we consider the quantization condition for the eigenvalues. Here we assume that $A(x)$ satisfies Assumption (A1) and $B(x)$ is real-valued, analytic and bounded on $\mathbb{R}$. 

\ \ \ Let $D(\lambda_{0},\varepsilon_{0}) = \left\{ x \in \mathbb{C};  |x-\lambda_{0}|<\varepsilon_{0}\right\}$ for a positive $\varepsilon_{0}$.  Under Assumption (A1), for all $\lambda \in D(\lambda_{0},\varepsilon_{0})$ and $\varepsilon \in (0,\varepsilon_{0}]$, there exist zeros of $A_{\varepsilon}(x)^{2}-\lambda^{2}$, $\alpha(\lambda,\varepsilon)$ and $\beta(\lambda,\varepsilon)$ such that $\alpha(\lambda_{0},0) = \alpha_{0}$ and $\beta(\lambda_{0},0) = \beta_{0}$. We simply write them as $\alpha_{\varepsilon}$ and $\beta_{\varepsilon}$, and define the action integral $I(\lambda,\varepsilon)$:
\begin{eqnarray} 
I(\lambda,\varepsilon)=\int^{\beta_{\varepsilon}(\lambda)}_{\alpha_{\varepsilon}(\lambda)} \sqrt{A_{\varepsilon}(t)^{2}-\lambda^{2}}dt. \label{act3}
\end{eqnarray}
In addition,  the exact WKB solutions for (\ref{ZSeq2}) are given by replacing  $A(x)$ with $A_{\varepsilon}(x)$ in (\ref{WKBsolution}), and we denote those solutions by ${\bm u}^{\pm}(x,h,\varepsilon;\gamma,x_{0})$.

\ \ \ We choose $\alpha_{\varepsilon}$ and $\beta_{\varepsilon}$ for the base points of the phase function $z(x,\varepsilon)$.
The Stokes lines which pass through the points $\alpha_{\varepsilon}$ and $\beta_{\varepsilon}$ are drown in Figure \ref{fig:stokestwo}.
\begin{figure}[htbp]
 \begin{center}
  \includegraphics[width=80mm]{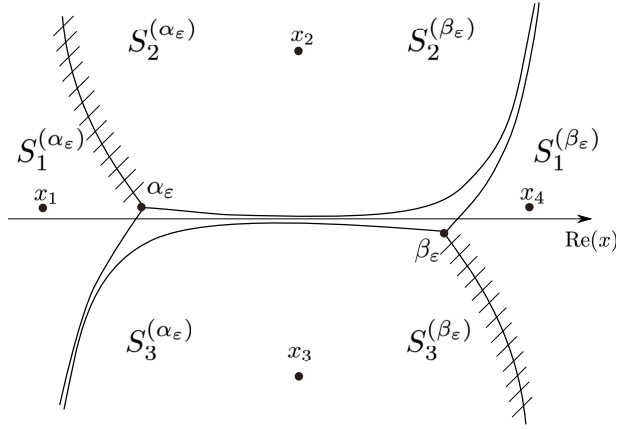}
 \end{center}
 \caption{The Stokes lines for a sufficiently small $\varepsilon$. $S_{j}^{(\alpha_{\varepsilon})}$ and $S_{k}^{(\beta_{\varepsilon})}$ indicate the sector which is generated by the Stokes lines emanating from $\alpha_{\varepsilon}$ and $\beta_{\varepsilon}$ respectively.}
 \label{fig:stokestwo}
\end{figure}

\ \ \ The Stokes lines continuously change with respect to $\varepsilon$ from the case of $\varepsilon=0$, since $\alpha_{\varepsilon}$, $\beta_{\varepsilon}$ and $z(x,\varepsilon)$ are continuous with respect to $\varepsilon$.
Here, we assume that $\varepsilon$ is sufficiently small, and take  base points as in Figure \ref{fig:stokestwo}.
Then, we can derive the quantization conditions for eigenvalues of $L_{\varepsilon}$ in the same way as the previous section.
\begin{lemma}
Assume (A1), and let $B(x)$ be real-valued, analytic and bounded on $\mathbb{R}$. In the case $A(\alpha_{0}) = A(\beta_{0})$, there exist positive constants $\varepsilon_{0}$ and $h_{0}$, and a function $r_{+}(\lambda,\varepsilon,h)$ bounded on $D(\lambda_{0},\varepsilon_{0}) \times (0, \varepsilon_{0}]  \times (0, h_{0}] $ such that $\lambda \in D(\lambda_{0},\varepsilon_{0})$ is an eigenvalue of $L_{\varepsilon}$ for $\varepsilon \in (0,\varepsilon_{0}]$ and $h \in (0,h_{0}]$ if and only if 
\begin{align}
I(\lambda,\varepsilon) = \left(k+\frac{1}{2}\right)\pi h + h^{2}r_{+}(\lambda,\varepsilon,h)
\end{align}
holds for some integer $k$. In the case $A(\alpha_{0}) = - A(\beta_{0})$, there exist positive constants $\varepsilon_{0}$ and $h_{0}$, and a function $r_{-}(\lambda,\varepsilon,h)$ bounded on $D(\lambda_{0},\varepsilon_{0}) \times (0, \varepsilon_{0}]  \times (0, h_{0}] $ such that $\lambda \in D(\lambda_{0},\varepsilon_{0})$ is an eigenvalue of $L_{\varepsilon}$ for $\varepsilon \in (0,\varepsilon_{0}]$ and $h \in (0,h_{0}]$ if and only if 
\begin{align}
I(\lambda,\varepsilon) = k\pi h + h^{2}r(\lambda,\varepsilon,h)
\end{align}
holds for some integer $k$. 
\end{lemma}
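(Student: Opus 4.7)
The plan is to run the entire argument of Section 3 verbatim with $A_{\varepsilon}$ in place of $A$, treating $\varepsilon$ as an extra parameter and keeping careful track of uniformity. First I would construct the exact WKB solutions ${\bm u}^{\pm}(x,h,\varepsilon;\gamma,x_{0})$ by replacing $A(x)$ with $A_{\varepsilon}(x)$ in (\ref{WKBsolution}); since $A_{\varepsilon}$ is analytic in $D$, Propositions 2.1 and \ref{wron} and Theorem \ref{asym} apply for each fixed $\varepsilon$, delivering the same convergence, Wronskian formula and asymptotic expansion. Because $\alpha_{\varepsilon}$, $\beta_{\varepsilon}$ and $z(x,\varepsilon)$ depend continuously on $\varepsilon$, the Stokes geometry deforms continuously from Figure \ref{fig:stokesone} to Figure \ref{fig:stokestwo}, so for sufficiently small $\varepsilon_{0}$ the four-sector structure is preserved and the base points $x_{1},x_{2},x_{3},x_{4}$ can be chosen once and for all, uniformly in $(\lambda,\varepsilon)\in D(\lambda_{0},\varepsilon_{0})\times(0,\varepsilon_{0}]$.

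The heart of the argument is the analog of Lemma \ref{lemmaA-1}: with ${\bm u}_{1},\ldots,{\bm u}_{4},\widetilde{{\bm u}}_{2},\widetilde{{\bm u}}_{3}$ defined as in (\ref{def-wkb27}) and with ${\bm u}_{1}=c_{2}{\bm u}_{2}+c_{3}{\bm u}_{3}$, ${\bm u}_{4}=\widetilde{c}_{2}\widetilde{{\bm u}}_{2}+\widetilde{c}_{3}\widetilde{{\bm u}}_{3}$, I need
\begin{equation*}
c_{2}\widetilde{c}_{3}=1+\mathcal{O}(h),\qquad \widetilde{c}_{2}c_{3}=\mp 1+\mathcal{O}(h)\qquad (h\to 0),
\end{equation*}
with the $\mathcal{O}(h)$ \emph{uniform} in $(\lambda,\varepsilon)\in D(\lambda_{0},\varepsilon_{0})\times(0,\varepsilon_{0}]$. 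Each coefficient is expressed as a ratio of $w_{\rm even}^{+}$ terms exactly as in (\ref{c2tilc3}); the paths $\Gamma(x_{i},x_{j})$ along which $\mathrm{Re}\,z(x,\varepsilon)$ is strictly increasing from the original proof remain admissible for small $\varepsilon$ by continuity of the Stokes lines, so Theorem \ref{asym} gives $w_{\rm even}^{+}=1+\mathcal{O}(h)$ with a uniform bound on compact subsets, and hence on the compact set swept out by the paths as $(\lambda,\varepsilon)$ varies in the (relatively compact) parameter domain. The branch-cut computation yielding the factor $\pm i$ from $H(x)=\mp i H(\hat x)$ only depends on the sign of $A_{\varepsilon}(\alpha_{\varepsilon})-\lambda$ and $A_{\varepsilon}(\beta_{\varepsilon})-\lambda$, which coincides with the unperturbed sign for small $\varepsilon$ and small $|\lambda-\lambda_{0}|$, so the formulas (\ref{z}), (\ref{w}) and (\ref{Q}) carry over unchanged.

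Finally, $\lambda$ is an eigenvalue of $L_{\varepsilon}$ iff ${\bm u}_{1}\in L^{2}(\mathbb{R}_{-})$ and ${\bm u}_{4}\in L^{2}(\mathbb{R}_{+})$ are linearly dependent. The $L^{2}$ decay at $\pm\infty$ survives the perturbation because $|A_{\varepsilon}(x)|^{2}-\mathrm{Re}\,\lambda^{2}$ stays bounded below by a positive constant outside a neighborhood of $[\alpha_{\varepsilon},\beta_{\varepsilon}]$ thanks to (A1)(iii)-(iv), so $\mathrm{Re}\,z(x,\varepsilon)$ grows linearly at infinity for $\varepsilon_{0}$ small. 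Using the base-point identities $\widetilde{{\bm u}}_{2}=e^{-iI(\lambda,\varepsilon)/h}{\bm u}_{2}$ and $\widetilde{{\bm u}}_{3}=e^{iI(\lambda,\varepsilon)/h}{\bm u}_{3}$ following from (\ref{act3}), the condition $\mathcal{W}({\bm u}_{1},{\bm u}_{4})=0$ becomes
\begin{equation*}
c_{2}\widetilde{c}_{3}\,e^{iI(\lambda,\varepsilon)/h}-\widetilde{c}_{2}c_{3}\,e^{-iI(\lambda,\varepsilon)/h}=0,
\end{equation*}
and taking the logarithm of $-c_{2}\widetilde{c}_{3}/(\widetilde{c}_{2}c_{3})=\pm 1+\mathcal{O}(h)$ yields the two stated quantization conditions, the remainders $r_{\pm}(\lambda,\varepsilon,h)$ being bounded by construction. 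The main obstacle is precisely the uniformity in $\varepsilon$: one must verify that the admissible paths for Theorem \ref{asym} and the monotonicity of $\mathrm{Re}\,z(x,\varepsilon)$ along them can be chosen independently of small $\varepsilon$, and that the logarithm $\log(-c_{2}\widetilde{c}_{3}/(\widetilde{c}_{2}c_{3}))$ lies in a single branch — both follow from continuity of $z(x,\varepsilon)$ in $\varepsilon$ combined with the global assumption (A1)(iv), but they are the only steps that require more than a mechanical translation of Section 3.
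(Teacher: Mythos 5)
Your proposal is correct and follows essentially the same route as the paper, which itself only states that the Stokes lines deform continuously in $\varepsilon$ and that the quantization condition is then ``derived in the same way as the previous section''; your write-up is in fact more explicit than the paper about the uniformity issues. The one point worth flagging is that for $\varepsilon\neq 0$ the degenerate Stokes line joining $\alpha$ and $\beta$ along the real axis generically breaks (this is why the paper draws separate sector families $S_{j}^{(\alpha_{\varepsilon})}$ and $S_{k}^{(\beta_{\varepsilon})}$), but for $\varepsilon$ small this does not obstruct the choice of admissible paths, so your argument goes through.
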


\ \ \ Now, we assume Assumption (A2) for $A_{\varepsilon}(x)$,  which results in a symmetry
of the action integral $I(\lambda,\varepsilon)$ and the exact WKB solutions ${\bm u}^{\pm}(x,h,\varepsilon;\gamma,x_{0})$ with respect to complex conjugation.
\begin{lemma}\label{acsym}
Under Assumption (A2), the action integral $I(\lambda,\varepsilon)$ is equal to the complex conjugate of $I(\overline{\lambda},\varepsilon)$:
\begin{align*}
\overline{I(\overline{\lambda},\varepsilon)} = I(\lambda,\varepsilon).
\end{align*}
\end{lemma}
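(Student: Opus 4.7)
The plan is to exploit the involution $t\mapsto -\bar t$, which, under either of the symmetries in (A2), turns the analytically extended $A_\varepsilon$ into (plus or minus) its own complex conjugate, and to push this symmetry through the definition of the action integral.

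First I would record the basic symmetry at the level of $A_\varepsilon^2$. Since $A$ and $B$ are real analytic in the strip $D$, the reflection principle gives $\overline{A(\bar z)}=A(z)$ and $\overline{B(\bar z)}=B(z)$. Combined with each of (\ref{sym1}) and (\ref{sym2}), a direct computation yields
\begin{equation*}
\overline{A_\varepsilon(-\bar z)}=\pm A_\varepsilon(z),\qquad\text{hence}\qquad \overline{A_\varepsilon(-\bar z)^{2}}=A_\varepsilon(z)^{2},
\end{equation*}
in both cases. Therefore $\overline{A_\varepsilon(-\bar t)^{2}-\overline{\lambda}^{\,2}}=A_\varepsilon(t)^{2}-\lambda^{2}$, which is the identity that will drive everything.

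Next I would track the turning points. Under (A1) plus (A2) one has $\beta_0=-\alpha_0$: in case (\ref{sym1}), $A(-\alpha_0)=A(\alpha_0)=\pm\lambda_0$, and in case (\ref{sym2}), $A(-\alpha_0)=-A(\alpha_0)$, and in either case the uniqueness clause of (A1)(1) forces $-\alpha_0=\beta_0$. The symmetry of $A_\varepsilon^{2}$ above then shows that if $t_0$ is a zero of $A_\varepsilon(\cdot)^{2}-\lambda^{2}$ then $-\bar t_0$ is a zero of $A_\varepsilon(\cdot)^{2}-\overline{\lambda}^{\,2}$; by continuity from $\varepsilon=0$, $\lambda=\lambda_0$, this pairing identifies
\begin{equation*}
\alpha_\varepsilon(\overline{\lambda})=-\overline{\beta_\varepsilon(\lambda)},\qquad \beta_\varepsilon(\overline{\lambda})=-\overline{\alpha_\varepsilon(\lambda)}.
\end{equation*}

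Finally I would compute $I(\overline{\lambda},\varepsilon)$ by parametrizing the integration path $\tilde\gamma$ from $\alpha_\varepsilon(\overline{\lambda})$ to $\beta_\varepsilon(\overline{\lambda})$ as $\tilde\gamma(\sigma)=-\overline{\gamma(1-\sigma)}$, where $\gamma$ is any path from $\alpha_\varepsilon(\lambda)$ to $\beta_\varepsilon(\lambda)$ used to compute $I(\lambda,\varepsilon)$ (such a $\tilde\gamma$ lies in the symmetric region where the square root is single-valued, provided the branch of $\sqrt{A_\varepsilon^{2}-\lambda^{2}}$ is chosen consistently across the two integrals; this consistency is the only subtle point, and I would fix it at the real turning points where both integrands are real up to sign, then propagate by analytic continuation). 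Substituting and using the identity from the first paragraph, the integrand satisfies $\sqrt{A_\varepsilon(\tilde\gamma(\sigma))^{2}-\overline{\lambda}^{\,2}}\,\tilde\gamma'(\sigma)=\overline{\sqrt{A_\varepsilon(\gamma(1-\sigma))^{2}-\lambda^{2}}\,\gamma'(1-\sigma)}$, so after the change of variable $u=1-\sigma$ one obtains $I(\overline{\lambda},\varepsilon)=\overline{I(\lambda,\varepsilon)}$, which is the claim.

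The main obstacle is the branch-choice bookkeeping in the third step: one must verify that the prescribed branches of $\sqrt{A_\varepsilon(t)^{2}-\lambda^{2}}$ and $\sqrt{A_\varepsilon(t)^{2}-\overline{\lambda}^{\,2}}$ correspond correctly under $t\mapsto -\bar t$, so that the pointwise conjugation identity used inside the integral is in force along the whole path and not only modulo sign. Everything else is a direct change of variables.
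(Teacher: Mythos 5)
Your proposal is correct and follows essentially the same route as the paper's proof: identify $-\overline{\beta_\varepsilon}$ and $-\overline{\alpha_\varepsilon}$ as the turning points for the spectral parameter $\overline{\lambda}$, take the complex conjugate of the integral, and change variables $t\mapsto-\overline{t}$ using $\overline{A_\varepsilon(-\overline{t})^{2}}=A_\varepsilon(t)^{2}$. You are somewhat more explicit than the paper about the path parametrization and the branch bookkeeping, but the substance is identical.
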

\begin{proof}
By a simple calculation, we find that $-\overline{\beta_{\varepsilon}}$ and $-\overline{\alpha_{\varepsilon}}$ are zeros of $A_{\varepsilon}(x)^{2} - \overline{\lambda}^{2}$ under Assumption (A2). 
That is, $I(\overline{\lambda},\varepsilon)$ is represented as
\begin{align*}
I(\overline{\lambda},\varepsilon)=\int_{-\overline{\beta_{\varepsilon}}}^{-\overline{\alpha_{\varepsilon}}} \sqrt{\overline{\lambda}^{2}-A_{\varepsilon}(t)^{2}} dt.
\end{align*}
We take the complex conjugate of this, and obtain that
\begin{align*}
\overline{I(\overline{\lambda}, \varepsilon)}
&=\int^{-\overline{\alpha_{\varepsilon}}}_{-\overline{\beta}_{\varepsilon}} \sqrt{ \overline{\left( \overline{\lambda}^2-A_{\varepsilon}(t)^{2}\right)}}d\overline{t}
=\int^{-\overline{\alpha_{\varepsilon}}}_{-\overline{\beta}_{\varepsilon}} \sqrt{  \lambda^2-\overline{A_{\varepsilon}(t)}^{2}}d\overline{t}.
\intertext{Then, we change the variable from $t$ to $-\overline{t}$,}
\overline{I(\overline{\lambda}, \varepsilon)}&=-\int^{\alpha_{\varepsilon}}_{\beta_{\varepsilon}} \sqrt{\lambda^2-\overline{A_{\varepsilon}(-\overline{t})}^{2}}dt
=\int_{\alpha_{\varepsilon}}^{\beta_{\varepsilon}} \sqrt{  \lambda^2-A_{\varepsilon}(t)^{2}}dt. 
\end{align*}
This is just the action integral.
\end{proof}

\ \ \ We denote the exact WKB solutions for the equation
\begin{align} \label{ZSeq3}
L_{\varepsilon}{\bm v}(x) &= \overline{\lambda} {\bm v}(x)
\end{align}
by ${\bm v}^{\pm}$. Then, ${\bm v}^{\pm}$ is obtained by replacing $\lambda$ with $\overline{\lambda}$. Under Assumtion (A2),  ${\bm u}^{\pm}$ and ${\bm v}^{\pm}$ also have the following symmetry relations.

\begin{lemma}  \label{WKBsym}
Under Assumption (A2), if $\overline{A_{\varepsilon}(-\overline{x})}=A_{\varepsilon}(x)$, the exact WKB solutions ${\bm u}^{\pm}(x,h,\varepsilon;\gamma,x_{0})$ and ${\bm v}^{\pm}(x,h,\varepsilon;\gamma,x_{0})$ satisfy
\begin{align*}
{\bm u}^{\pm}(x,h,\varepsilon;\gamma,x_{0})= 
\begin{pmatrix}
1 & 0 \\
0 & -1 
\end{pmatrix}
\overline{{\bm v}^{\mp}(-\overline{x},h,\varepsilon;-\overline{\gamma},-\overline{x_{0}})},
\end{align*}
and if $\overline{A_{\varepsilon}(-\overline{x})}=-A_{\varepsilon}(x)$, then
\begin{align*}
{\bm u}^{\pm}(x,h,\varepsilon;\gamma,x_{0}) = \pm
\begin{pmatrix}
0 & -1 \\
1 & 0 
\end{pmatrix}
\overline{{\bm v}^{\mp}(-\overline{x},h,\varepsilon;-\overline{\gamma},-\overline{x_{0}})}.
\end{align*}
\end{lemma}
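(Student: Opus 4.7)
The plan is to verify the claimed identity factor by factor on the ansatz \eqref{WKBsolution} and then assemble the three pieces. Schwarz reflection applied to the real-analytic functions $A$ and $B$ extends the symmetries \eqref{sym1}--\eqref{sym2} from $\mathbb{R}$ to the strip $D$, from which I read off $\overline{A_{\varepsilon}(-\overline{x})}=A_{\varepsilon}(x)$ in case \eqref{sym1} and $\overline{A_{\varepsilon}(-\overline{x})}=-A_{\varepsilon}(x)$ in case \eqref{sym2}. Since only $A_{\varepsilon}^{2}$ appears under the square root, the substitution $t=-\overline{s}$ in the integral defining $z_{v}(x;\gamma)=\int_{\gamma}^{x}\sqrt{A_{\varepsilon}(t)^{2}-\overline{\lambda}^{2}}\,dt$ (the phase associated with $\overline{\lambda}$) gives in both cases
\[
\overline{z_{v}(-\overline{x};-\overline{\gamma})}=-z(x;\gamma),
\]
hence $\overline{e^{\pm z_{v}(-\overline{x};-\overline{\gamma})/h}}=e^{\mp z(x;\gamma)/h}$. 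This is the origin of the $\pm\leftrightarrow\mp$ swap in the statement.

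Next, for the matrix factor $Q$, writing $H_{v}(y)=((A_{\varepsilon}(y)+\overline{\lambda})/(A_{\varepsilon}(y)-\overline{\lambda}))^{1/4}$, one obtains $\overline{H_{v}(-\overline{x})}=H(x)$ in case \eqref{sym1}, whence $\overline{Q_{v}(-\overline{x})}=Q(x)\bigl(\begin{smallmatrix}0&1\\1&0\end{smallmatrix}\bigr)$; in case \eqref{sym2} the ratio inside the fourth root inverts, giving $\overline{H_{v}(-\overline{x})}=\omega H(x)^{-1}$ for some fourth root of unity $\omega$ determined by the chosen branches, together with a corresponding swap of the $H$ and $H^{-1}$ entries in $\overline{Q_{v}(-\overline{x})}$. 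Combined with $\overline{\bigl(\begin{smallmatrix}1&1\\-i&i\end{smallmatrix}\bigr)}=\mathrm{diag}(1,-1)\bigl(\begin{smallmatrix}1&1\\-i&i\end{smallmatrix}\bigr)$ and with the transposition power $\bigl(\begin{smallmatrix}0&1\\1&0\end{smallmatrix}\bigr)^{(1\pm 1)/2}$ already present in \eqref{WKBsolution}, these identities assemble to the prefactor $\mathrm{diag}(1,-1)$ of the first displayed formula and, after $\omega$ is absorbed, to the prefactor $\pm\bigl(\begin{smallmatrix}0&-1\\1&0\end{smallmatrix}\bigr)$ of the second.

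For the series I would show by induction on $n$, using the recurrences \eqref{diffx}, that the sequences $W_{n}^{\pm}(x):=\overline{(w_{v})_{n}^{\pm}(-\overline{x},h;-\overline{x_{0}})}$ satisfy a recurrence obtained from the $w^{\mp}$ recurrence by the substitutions $c(x)\to\overline{c_{v}(-\overline{x})}$ and $z'(x)\to-\overline{z_{v}'(-\overline{x})}=-z'(x)$, with the initial conditions \eqref{init-z}. The inductive step rests on the Wirtinger identity $\frac{d}{dx}\overline{f(-\overline{x})}=-\overline{f'(-\overline{x})}$ for holomorphic $f$, together with $\overline{z_{v}'(-\overline{x})}=z'(x)$ (from the phase analysis above) and the $c$-identity $\overline{c_{v}(-\overline{x})}=\mp c(x)$ (minus in case \eqref{sym1}, plus in case \eqref{sym2}), derived by differentiating the $H$-identity. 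In case \eqref{sym1} the two minus signs cancel in the even equation and combine with the sign flip of $z'$ to convert the $\pm 2z'/h$ term into $\mp 2z'/h$, so by uniqueness $\overline{{\bm w}_{v}^{\pm}(-\overline{x};-\overline{x_{0}})}={\bm w}^{\mp}(x;x_{0})$; in case \eqref{sym2} one finds instead $W_{n}^{\pm}=(-1)^{n}w_{n}^{\mp}$, i.e.\ $\overline{{\bm w}_{v}^{\pm}(-\overline{x};-\overline{x_{0}})}=\mathrm{diag}(1,-1)\,{\bm w}^{\mp}(x;x_{0})$, and this extra $\mathrm{diag}(1,-1)$ is exactly what compensates the $H\leftrightarrow H^{-1}$ swap in $\overline{Q_{v}(-\overline{x})}$ so that the final prefactor has the clean form stated in the lemma.

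The main obstacle is the consistent bookkeeping of branches---of the fourth root in $H$ and of the square root in $z'$---under the anti-holomorphic reflection $x\mapsto-\overline{x}$; in case \eqref{sym2} the rotation $A_{\varepsilon}\mapsto-A_{\varepsilon}$ permits a factor of $i$ in $\omega$, and the sign of that factor is precisely what becomes the $\pm$ in front of $\bigl(\begin{smallmatrix}0&-1\\1&0\end{smallmatrix}\bigr)$ in the second formula. Fixing branches once and for all at a reference point (for instance on the real axis to the right of $\beta_{\varepsilon}$ as in Section~3) and propagating them coherently along a path under the reflection $x\mapsto-\overline{x}$ is what makes the constants in the two displayed formulas work out as stated.
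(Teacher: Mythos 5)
Your proposal is correct and follows essentially the same route as the paper: both decompose the ansatz \eqref{WKBsolution} into the phase $e^{\pm z/h}$, the matrix $Q$, and the series ${\bm w}^{\pm}$, establish the reflection identity $\overline{z(-\overline{x};-\overline{\gamma};\overline{\lambda})}=-z(x;\gamma;\lambda)$, the $Q$-conjugation identities, and the relation $\overline{{\bm w}^{\pm}(-\overline{x};-\overline{x_{0}};\overline{\lambda})}={\bm w}^{\mp}(x;x_{0};\lambda)$ (with the extra $\mathrm{diag}(1,-1)$ in case \eqref{sym2}), and then assemble. The paper simply states these three component identities, while you additionally sketch their derivations (Schwarz reflection, substitution in the integral, induction on the recurrences \eqref{diffx}); the only place you are less explicit is in pinning down the fourth root of unity $\omega$, which the paper fixes concretely in its formula $\overline{Q(-\overline{x})}=-i\bigl(\begin{smallmatrix}0&1\\-1&0\end{smallmatrix}\bigr)Q(x)\bigl(\begin{smallmatrix}0&1\\-1&0\end{smallmatrix}\bigr)$.
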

\begin{proof}
Let $\overline{A_{\varepsilon}(-\overline{x})}=A_{\varepsilon}(x) $. By taking the complex conjugate and changing the variable $x$ to $-\overline{x}$ for the functions $z$ and $\bm{w}^{\pm}$ of the solutions $\bm{v}^{\pm}$, we obtain 
\begin{align*}
\overline{z(-\overline{x},\varepsilon;-\overline{\gamma};\overline{\lambda})} &= -z(x,\varepsilon;\gamma;\lambda),\\
\intertext{and}
\overline{{\bm w}^{\pm}(-\overline{x},h,\varepsilon;-\overline{x_{0}};\overline{\lambda})} &= {\bm w}^{\mp}(x,h,\varepsilon;x_{0};\lambda).
\end{align*}
In the same way, for the matrix function $Q(x,\varepsilon)$ of $\bm{v}^{\pm}$, 
\begin{align*}
\overline{Q(-\overline{x},\varepsilon;\overline{\lambda})} = 
Q(x,\varepsilon;\lambda)
\begin{pmatrix}
0 & 1 \\
1 & 0
\end{pmatrix}.
\end{align*}
Here, we recall that the solutions $\bm{v}^{\pm}$ is of the form
\begin{align*}
{\bm v}^{\pm}(x,h,\varepsilon;\gamma,x_{0}) = 
\begin{pmatrix}
1 & 1 \\
-i & i
\end{pmatrix}
 e^{\pm z(x,\varepsilon;\gamma)/h}Q(x,\varepsilon)
\begin{pmatrix}
0 & 1 \\
1 & 0 
\end{pmatrix}
^{\frac{1\pm1}{2}} {\bm w}^{\pm}(x,h,\varepsilon;x_{0}).
\end{align*}
By taking the complex conjugate and changing the variable $x$ to $-\overline{x}$, and using above, we obtain the first relation
\begin{align*}
\begin{pmatrix}
1 & 0 \\
0 & -1
\end{pmatrix}
\overline{{\bm v}^{\mp}(-\overline{x},h,\varepsilon;-\overline{\gamma},-\overline{x_{0}})}=
{\bm u}^{\pm}(x,h,\varepsilon;\gamma,x_{0}).
\end{align*}

If $\overline{A_{\varepsilon}(-\overline{x})}=-A_{\varepsilon}(x) $, then we find that
\begin{align*}
\overline{z(-\overline{x},\varepsilon;-\overline{\gamma};\overline{\lambda})} &= -z(x,\varepsilon;\gamma;\lambda),\\
\overline{{\bm w}^{\pm}(-\overline{x},h,\varepsilon;-\overline{x_{0}};\overline{\lambda})} &=
\begin{pmatrix}
1 & 0 \\
0 & -1
\end{pmatrix}
{\bm w}^{\mp}(x,h,\varepsilon;x_{0};\lambda),
\end{align*}
and
\begin{align*}
\overline{Q(-\overline{x},\varepsilon;\overline{\lambda})} =  -i
\begin{pmatrix}
0 & 1 \\
-1 & 0
\end{pmatrix}
Q(x,\varepsilon;\lambda)
\begin{pmatrix}
0 & 1 \\
-1 & 0
\end{pmatrix}.
\end{align*}
From this property, the second relation also follows.
\end{proof}

\ \ \ Here we take the base points $x_{1} \in S^{(\alpha_{\varepsilon})}_{1}$ and $x_{2} \in S^{(\alpha_{\varepsilon})}_{2}$ so that $-\overline{x_{1}} \in S^{(\beta_{\varepsilon})}_{1}$ and $-\overline{x_{2}} \in S^{(\beta_{\varepsilon})}_{3}$, and set the exact WKB solutions for (\ref{ZSeq2}):
\begin{gather*}
  \left\{
    \begin{array}{l}
      {\bm u}_{1}={\bm u}^{+}(x,h, \varepsilon ;\alpha_{\varepsilon},x_{1}), \quad {\bm u}_{2}={\bm u}^{+}(x,h,\varepsilon;\alpha_{\varepsilon},x_{2}),\\
{\bm u}_{3}={\bm u}^{-}(x,h,\varepsilon;\alpha_{\varepsilon},-\overline{x_{2}})\quad
{\bm u}_{4}={\bm u}^{-}(x,h,\varepsilon;\beta_{\varepsilon},-\overline{x_{1}}).
    \end{array}
  \right. 
\end{gather*}
In addition, let us define a function $W(\lambda, \varepsilon)$ by the Wronskian of  ${\bm u}_{1}$ and ${\bm u}_{4}$, that is,
\begin{align*}
W(\lambda,\varepsilon) := \mathcal{W}( {\bm u}_{1}, {\bm u}_{4}).
\end{align*}
We also take the solutions for (\ref{ZSeq3}) as
\begin{gather*}
  \left\{
    \begin{array}{l}
      {\bm v}_{1}={\bm v}^{+}(x,h, \varepsilon ;-\overline{\beta_{\varepsilon}},x_{1}), \quad {\bm v}_{2}={\bm v}^{+}(x,h,\varepsilon;-\overline{\alpha_{\varepsilon}},x_{2}),\\
{\bm v}_{3}={\bm v}^{-}(x,h,\varepsilon;-\overline{\alpha_{\varepsilon}},-\overline{x_{2}})\quad
{\bm v}_{4}={\bm v}^{-}(x,h,\varepsilon;-\overline{\alpha_{\varepsilon}},-\overline{x_{1}}).
    \end{array}
  \right. 
\end{gather*}
Then, we see that 
\begin{align} \label{Wsym}
W(\lambda,\varepsilon) = \pm \overline{W(\overline{\lambda}, \varepsilon)},
\end{align}
by the definition of $W(\lambda,\varepsilon)$ and applying Lemma \ref{WKBsym}.
Here, the sign of (\ref{Wsym}) is dependent on whether $A_{\varepsilon}(x) = \overline{A(-\overline{x})}$ or $A_{\varepsilon}(x) = -\overline{A(-\overline{x})}$.

\ \ \ Now, we recall that $W(\lambda,\varepsilon)$ is represented as
\begin{align} \label{W1}
W(\lambda,\varepsilon)= a(\lambda,\varepsilon,h)e^{iI(\lambda,\varepsilon)/h} + b(\lambda,\varepsilon,h)e^{-iI(\lambda,\varepsilon)/h},
\end{align}
where $a$ and $b$ are some functions with $a = 1+\mathcal{O}(h)$ and  $b = 1+\mathcal{O}(h)$ or $-1+\mathcal{O}(h)$ as $h \rightarrow 0 $. In particular, $a$ and $b$ satisfy
\begin{align} \label{absym}
a(\lambda,\varepsilon,h) = \pm \overline{b(\overline{\lambda},\varepsilon,h)},
\end{align}
since $W(\lambda,\varepsilon)$ satisfy (\ref{Wsym}), and $I(\lambda,\varepsilon)$ also satisfy Lemma \ref{acsym}.
Moreover, (\ref{W1}) is rewritten as
\begin{align*}
W(\lambda,\varepsilon)= b(\lambda,\varepsilon,h)e^{-I(\lambda,\varepsilon)/h}\left(\exp \left(\frac{2i}{h}\left(I(\lambda,\varepsilon) +h^{2} r(\lambda,\varepsilon,h)\right)\right) +1 \right),
\end{align*}
where
\begin{align*}
r(\lambda,\varepsilon,h) = \frac{1}{2ih}\log\frac{a(\lambda,\varepsilon,h)}{b(\lambda,\varepsilon,h)}.
\end{align*}
Then, we use (\ref{absym}) and obtain that 
\begin{align} \label{rsym}
r(\lambda,\varepsilon,h) = \overline{r(\overline{\lambda},\varepsilon,h)}.
\end{align}

Here we take $I(\lambda,\varepsilon,h)$ as
\begin{align*}
I(\lambda,\varepsilon,h):=I(\lambda,\varepsilon) +h^{2} r(\lambda,\varepsilon,h).
\end{align*}
This is a function from a neighborhood of $\lambda_{0}$ to one of $I(\lambda_{0}, 0)$. In particular, $r(\lambda,\varepsilon,h)$ is holomorphic near $\lambda_{0}$, and $I(\lambda,\varepsilon)$ satisfies $\frac{dI}{d\lambda}(\lambda_{0},0) \neq 0$. 
This implies that $I(\lambda,\varepsilon,h)$ has an inverse function $I^{-1}(\zeta,\varepsilon,h)$, and the eigenvalues of $L_{\varepsilon}$ near $\lambda_{0}$ are given by
\begin{align*}
\lambda_{\varepsilon,k} = I^{-1}\left(c_{k} \pi h, \varepsilon, h \right), \quad k \in \mathbb{Z},
\end{align*}
where $c_{k} = k$ or $k + 1/2$. In addition, we know that $I(\lambda,\varepsilon,h)$ is real for $\lambda \in \mathbb{R}$, since $r(\lambda,\varepsilon,h)$ and $I(\lambda,\varepsilon)$ is real for $\lambda \in \mathbb{R}$ by (\ref{rsym}) and Lemma \ref{acsym}. That is, the eigenvalues near $\lambda_{0}$ are real.

\end{document}